\newcolumntype{M}[1]{>{\centering\arraybackslash}m{#1}}
\newtheorem{theorem}{Theorem}
\newtheorem{assumption}{Assumption}
\newtheorem{lemma}{Lemma}
\title{A Convex Primal Formulation for Convex Hull Pricing}
\author{Bowen Hua \and Ross Baldick\footnote{The authors are with the Department of Electrical and Computer Engineering, the University of Texas at Austin, Austin, TX 78701 USA (e-mail: bhua@utexas.edu; baldick@ece.utexas.edu). This work was supported by the Defense Threat Reduction Agency (DTRA) under Grant HDTRA1-10-1-0087 and Grant HDTRA1-14-1-0021.
The initial version of this paper was submitted to arXiv on May 17, 2016.}}
\begin{document}
\maketitle

\begin{abstract}
In certain electricity markets, because of non-convexities that arise from their operating characteristics, generators that follow the independent system operator's (ISO's) decisions may fail to recover their cost through sales of energy at locational marginal prices. The ISO makes discriminatory side payments to incentivize the compliance of generators. Convex hull pricing is a uniform pricing scheme that minimizes these side payments. The Lagrangian dual problem of the unit commitment problem has been solved in the dual space to determine convex hull prices. However, this approach is computationally expensive. We propose a polynomially-solvable primal formulation for the Lagrangian dual problem. This formulation explicitly describes for each generating unit the convex hull of its feasible set and the convex envelope of its cost function. We cast our formulation as a second-order cone program when the cost functions are quadratic, and a linear program when the cost functions are piecewise linear. A 96-period 76-unit transmission-constrained example is solved in less than fifteen seconds on a personal computer.
\end{abstract}

\section{Introduction}
%
%
%
%
Day-ahead and some real-time electricity markets in the US currently base their market clearing model on a unit commitment and economic dispatch (UCED) problem \cite{PJM2015,ERCOT2016,Sioshansi2010}. The independent system operator (ISO) sends energy prices and target quantity instructions to each generating unit (called ``unit'' for short hereafter) based on a welfare-maximizing solution to the UCED problem. Ideally, energy prices provide incentives for profit-maximizing market participants to comply with the ISO's commitment and dispatch decisions. Various issues prevent this ideal, however, including the non-convexities in the market that arise from units' operating characteristics. Consequently, start-up and no-load costs of units may not be covered by sales of energy at locational marginal prices (LMPs). More generally, in a market with non-convexities, there might be no set of uniform prices\footnote{A uniform price is a single price that applies to all transactions at a given bus. Prices may vary locationally.} that supports a welfare-maximizing solution \cite{Gribik2007}.\footnote{A set of prices is said to support a solution if the economic agents' profit maximizing decisions align with this solution.}

One way to address this problem is to maintain uniform energy prices based on marginal energy costs and provide side payments to units that have an incentive to deviate from the ISO's solution. This side payment is also known as an ``uplift'' payment. In principle, the amount of uplift payment to a unit should cover its lost opportunity cost, the gap between its maximum possible profit and the actual profit obtained by following the ISO's solution. For example, the cost of a fast-start unit dispatched at its minimum limit may not be covered by sales of energy at its marginal cost. If revenues are based solely on LMP, then the profit-maximizing decision of this unit is to shut down. An uplift payment is needed to keep this unit online \cite{Wang2016}. Unlike energy prices, uplift payments are non-uniform (discriminatory) in that the amount of payment is unit-specific. These side payments make it harder for a potential entrant to determine if new entry would be profitable, particularly if the uplift payments are not disclosed publicly. 

Transparency of the market can be improved by keeping uplift payments as low as possible. To this end, several alternative pricing schemes have been proposed. For example, an ad-hoc method to reduce uplift payments to fast-start units is to relax their minimum generation limits to zero, so that they can set the LMPs \cite{FERC2015}. Keeping marginal prices as uniform energy prices, the pricing scheme proposed in \cite{o2005efficient} introduces artificial constraints that set the commitment variables at a welfare-maximizing solution and create discriminatory side payments for commitment decisions based on the optimal dual variables associated with these artificial constraints. Different from the pricing schemes that aim at supporting a welfare-maximizing solution, pricing schemes such as \cite{ruiz2012pricing} have been proposed to incentivize a commitment and dispatch solution that is close, but not necessarily equal to the ISO's welfare-maximizing solution. In these methods, allocative efficiency is traded off against transparency. Instead of focusing on unit-commitment based markets that are typical in the US, Ortner and Huppmann \cite{Ortner} define a quasi-equilibrium for a self-committed electricity market, and determine prices through a mathematical program with equilibrium constraints.  See \cite{Liberopoulos2016} for a comprehensive review of different pricing schemes for markets with non-convexities.

Convex hull pricing \cite{Ring1995,Hogan2003,Gribik2007} is a pricing scheme that minimizes certain uplift payments over all possible uniform prices, and has received much attention. The Midcontinent ISO (MISO) has implemented an approximation of convex hull pricing, and refers to convex hull prices as extended locational marginal prices (ELMPs) \cite{Wang2016}. Convex hull prices are slopes of the convex envelope of the system cost function,\footnote{More precisely, they are sub-gradients of the convex envelope of the system cost function. The system cost function here is the value function of the UCED problem parametrized by demand. The convex envelope of a function is the largest convex under-estimator of the given function.} and are thus non-decreasing with respect to demand. These prices minimize the total uplift payment defined by the duality gap between the UCED problem and its Lagrangian dual.

The Lagrangian dual problem of the UCED problem in which the system-wide constraints are dualized has been used to determine convex hull prices. This problem is convex but non-smooth. Algorithms such as sub-gradient methods, bundle methods \cite{lemarechal2001lagrangian}, and cutting plane methods \cite{goffin2002convex} have been proposed to solve the Lagrangian dual problem of mixed-integer programming problems. In the context of convex hull pricing, the focus is on the optimality of dual variables, rather than obtaining primal solutions. Therefore, in addition to general-purpose methods, an outer approximation method \cite{Gribik2007}, a sub-gradient simplex cutting plane method \cite{Wang2013a}, and an extreme-point sub-differential method \cite{Wang2013} have been designed specifically for convex hull pricing.
 
None of the above-mentioned methods guarantees convergence in polynomial time. For non-smooth optimization techniques like the sub-gradient method for which no certificate of optimality exists, the algorithm is often terminated before an optimal value is attained \cite[Chapter 10.3]{wolsey1998integer}. Obtaining exact dual maximizers of the Lagrangian dual problem is computationally expensive \cite{Hogan2014a, Wang2016}. Consequently, MISO implements a single-period approximation of convex hull pricing that is based on a version of the UCED problem in which integer variables are relaxed to being continuous \cite{Wang2016}.

Efficient computation of convex hull prices remains challenging. This paper proposes a polynomially-solvable formulation for convex hull pricing. Section \ref{sec:chp} introduces convex hull pricing. Section \ref{sec:formulation} presents a primal formulation of the Lagrangian dual problem of the UCED problem. In this formulation, we explicitly describe convex hulls of individual unit's feasible commitment and dispatch decisions and convex envelopes of individual cost functions. We cast the primal formulation as a second-order cone program if the cost functions are quadratic, and a linear program if the cost functions are affine or piecewise linear. Our formulation gives exact convex hull prices in the absence of ramping constraints. 

Section \ref{sec:extensions} considers several extensions to our model. We show that exact convex hull prices can still be obtained when ancillary services or any linear system-wide constraints (such as the transmission constraints) are introduced. Ramping constraints lead to an exponential number of valid inequalities in the convex hull representation. In this case, we approximate the convex hulls using valid inequalities developed in \cite{Damc-Kurt2015, Pan2015}. Since only a linearly-constrained convex program needs to be solved to obtain convex hull prices, our approach is robust and scalable. Section \ref{sec:results} reports numerical tests conducted on several examples found in the literature, and conclusions follow in Section \ref{sec:conclusions}.

\section{Convex Hull Pricing}

\label{sec:chp}
This section formulates a unit commitment and economic dispatch (UCED) problem. We refer to coupling constraints that enforce system-wide requirements (e.g., demand and transmission constraints) as system-wide constraints. In contrast, constraints that an individual unit faces are called private constraints. 

We first consider a UCED problem in which the only type of system-wide constraint is supply-demand balance constraint. We define the uplift payment to a unit to be its lost opportunity cost. We then introduce the concept of convex hull pricing and describe the Lagrangian dual problem of the UCED problem. This simple model suffices to illustrate the basic ideas of convex hull pricing and our primal formulation.

\subsection{The UCED Problem}
 We consider a $T$-period offer-based UCED problem with $|\mathcal{G}|$ units. For unit $g \in \mathcal{G}$ at time $t \in \lbrace 1, \ldots, T \rbrace$, the commitment variable $x_{gt}$ is $1$ if the unit is online and is $0$ if the unit is offline. The start-up variable $u_{gt}$ is $1$ if unit $g$ starts up at period $t$ and is $0$ otherwise. 
 
Denote unit $g$'s dispatch-level (dispatched power output) vector by $\bm{p}_g \in \mathbb{R}^{T}_+$, whose $t$-th component is the dispatch level at time $t$. Similarly, $\bm{x}_g \in \lbrace 0,1 \rbrace^{T}$ denotes the commitment vector, and $\bm{u}_g \in \lbrace 0,1 \rbrace^{T-1}$ is the start-up vector.\footnote{The start-up variables $\bm{u}_g$ are defined from period $2$ to $T$. For simplicity we do not consider the initial conditions of the units. Our formulation can be extended to consider these conditions.} Let unit $g$'s offer cost function be $C_g(\bm{p}_g, \bm{x}_g, \bm{u}_g)$, which may include energy, start-up, and no-load costs. As in \cite{Guan1992,Ostrowski2012}, we assume that $C_g$ is convex piecewise linear or convex quadratic in $\bm{p}_g$. We assume that the start-up and no-load costs are constant. Let $\mathcal{X}_g \subseteq \mathbb{R}_+^{T} \times \lbrace 0,1 \rbrace^{T} \times \lbrace 0,1 \rbrace^{T-1}$ be the set of feasible commitment and dispatch decisions for unit $g$. We assume that private constraints that define $\mathcal{X}_g$ are specified by linear inequalities; these constraints may include generation limits, minimum up/down time, and perhaps ramping constraints \cite{Guan1992}. 
 
Let $\bm{d} \in \mathbb{R}^{T}_+$ be a demand vector, whose $t$-th component denotes system demand at time $t$. The UCED problem makes a set of commitment and dispatch decisions that minimizes the total cost, while satisfying physical and operational constraints:
\begin{IEEEeqnarray}{llllr}
v(\bm{d}) = \quad &\underset{\bm{p}_g, \bm{x}_g, \bm{u}_g, \, g \in \mathcal{G}}{\min} & \quad & \sum_{g \in \mathcal{G}} C_g(\bm{p}_g, \bm{x}_g, \bm{u}_g)& \label{opt:UCED_start}\\
&\text{s.t.} && \sum_{g \in \mathcal{G}} \bm{p}_g = \bm{d} \label{opt:UCED_demand} \\
&&&(\bm{p}_g, \bm{x}_g, \bm{u}_g) \in \mathcal{X}_g & \quad \forall g \in \mathcal{G} \label{opt:UCED_end}.
\end{IEEEeqnarray}

In addition, we view the UCED problem as parametrized by the demand vector $\bm{d}$, and denote the value function of the UCED problem by $v(\bm{d})$.

\subsection{Non-convexity and Uplift Payments}

Suppose that an energy price vector $\bm{\pi}$ is specified by the independent system operator (ISO). Assume that unit $g$ is a price-taker.\footnote{We assume that the units are prices takers in the economic sense that they cannot affect prices.} Its profit maximization problem is:
\begin{IEEEeqnarray}{llllr}
w_g(\bm{\pi})  = \quad &\underset{\bm{p}_g, \bm{x}_g, \bm{u}_g}{\max} & \quad & \bm{\pi}^\intercal \bm{p}_g -  C_g(\bm{p}_g, \bm{x}_g, \bm{u}_g)& \label{opt:profitmax_start} \\
& \text{s.t.} && (\bm{p}_g, \bm{x}_g, \bm{u}_g) \in \mathcal{X}_g \label{opt:profitmax_end},
\end{IEEEeqnarray}
where $w_g(\bm{\pi})$ is the value function of this problem.

From a microeconomic viewpoint, the ISO's UCED problem \eqref{opt:UCED_start}--\eqref{opt:UCED_end} is a social planner's problem whose solution is welfare-maximizing. Problem \eqref{opt:profitmax_start}--\eqref{opt:profitmax_end} is the profit maximization problem of a rational agent. If the ISO's problem is convex and satisfies strong duality, we can set $\bm{\pi}$ to be the optimal dual vector associated with the supply-demand balance constraints in the planner's problem. As a result, there exist individual profit-maximizing decisions that align with the welfare-maximizing solution, but the ISO may need to specify which solution is welfare maximizing. If the ISO's problem were strictly convex, then prices alone would provide sufficient information for the units to determine an efficient decision \cite[Chapter 16]{mas1995microeconomic}. 

However, the UCED problem is non-convex because of the integer decision variables $\bm{x}_g$ and $\bm{u}_g$. Thus, in general, there does not exist a set of prices that support the ISO's decisions. In particular, revenues from locational marginal prices (LMPs) may not cover the offered costs of a unit, and the unit may prefer to deviate from the ISO's commitment and dispatch decisions, unless there is additional incentive to follow those decisions. LMPs are determined as the optimal dual variables associated with the supply-demand balance constraints in a continuous convex economic dispatch problem with commitment decisions fixed at ISO-determined optimal values.

One way to address the above-mentioned problem is for the ISO to maintain uniform energy prices and provide side payments to units whose individually rational decision is different from ISO's. In principle, these side payments, also known as ``uplift,'' should cover the gap between the maximum possible profit (the optimal objective function value of problem  \eqref{opt:profitmax_start}--\eqref{opt:profitmax_end}), and the profit made by following the ISO's decision (the value of objective function \eqref{opt:profitmax_start} evaluated at the ISO's decisions).

Mathematically, given a set of uniform energy prices $\bm{\pi}$ and an ISO's welfare-maximizing decision $(\bm{p}^*, \bm{x}^*, \bm{u}^*)$, the amount of uplift payment needed for unit $g$ equals its \emph{lost opportunity cost}: 
\begin{equation}
	U_g(\bm{\pi}, \bm{p}^*, \bm{x}^*, \bm{u}^*) = w_g(\bm{\pi}) - (\bm{\pi}^\intercal \bm{p}_g^* -  C_g(\bm{p}_g^*, \bm{x}_g^*, \bm{u}_g^*)) , \label{eq:individual_uplift}
\end{equation}
Since the ISO's decision $(\bm{p}^*, \bm{x}^*, \bm{u}^*)$ is a feasible, but not necessarily optimal solution to problem  \eqref{opt:profitmax_start}--\eqref{opt:profitmax_end}, $U_g(\bm{\pi}, \bm{p}^*, \bm{x}^*, \bm{u}^*)$ is non-negative. 


\subsection{Convex Hull Pricing and the Lagrangian Dual Problem}

Convex hull prices can be determined through a Lagrangian relaxation \cite{lemarechal2001lagrangian} of the UCED problem. We dualize the supply-demand balance constraints \eqref{opt:UCED_demand} and obtain the Lagrangian dual function 
\begin{equation}
	q(\bm{\pi}) =  \sum_{g \in \mathcal{G}} \left( \underset{(\bm{p}_g, \bm{x}_g, \bm{u}_g) \in \mathcal{X}_g}{\min} C_g(\bm{p}_g, \bm{x}_g, \bm{u}_g) - \bm{\pi}^\intercal \bm{p}_g \right) + \bm{\pi}^\intercal \bm{d}, \label{eq:Lagrangian_dual_function}
\end{equation}
where $\bm{\pi} \in \mathbb{R}^{T}$ is now the dual vector associated with the supply-demand balance constraints. The Lagrangian dual problem is
\begin{equation}
	\underset{\bm{\pi}}{\max} \quad q(\bm{\pi}). \label{opt:L_dual}
\end{equation}

The \emph{convex hull prices} are defined to be the dual maximizers $\bm{\pi}^*$. The value function of the Lagrangian dual problem \eqref{opt:L_dual} as a function of $\bm{d}$ is the convex envelope of $v(\bm{d})$ \cite{Falk1969}. The price vector is a sub-gradient of the convex envelope of $v(\bm{d})$. 

The duality gap between the UCED problem and its dual \eqref{eq:Lagrangian_dual_function} is exactly the total lost opportunity costs,
\begin{equation}
	 \sum_{g \in \mathcal{G}} U_g(\bm{\pi}, \bm{p}^*, \bm{x}^*, \bm{u}^*). \label{eq:total_uplift}
\end{equation}
Consequently, convex hull pricing as a uniform pricing scheme minimizes total uplift payment as defined by \eqref{eq:total_uplift}; that is, it minimizes the total lost opportunity costs of all participating units. This is a special case of a more general result on the type of uplift payments convex hull pricing minimizes, as shown in \cite{Gribik2007, Cadwalader2010, Schiro2015}, since the only type of system-wide constraint considered for now is the supply-demand balance constraint. 

\section{A Primal Formulation for Convex Hull Pricing}
\label{sec:formulation}

This section proposes a convex primal formulation for the Lagrangian dual problem of the UCED problem. In this formulation, the feasible set for each unit is replaced by its convex hull, and the individual cost functions are replaced by their convex envelopes.

\subsection{A Primal Formulation for the Lagrangian Dual Problem}
We make the following:
\begin{assumption}
The set $\mathcal{X}_g$ is compact for all $g \in \mathcal{G}$, and all system-wide constraints are linear.\label{assump}
\end{assumption}

Let $\text{conv}( \cdot )$ denote the convex hull of a set.\footnote{The convex hull of a set is all convex combinations of points in that set.} Let $C_{g, \mathcal{X}_g}^{**}(\cdot)$ be the convex envelope of $C_g(\cdot)$ \emph{taken over $\mathcal{X}_g$}. The function $C_{g, \mathcal{X}_g}^{**}(\cdot)$ is the largest convex function on $\text{conv}(\mathcal{X}_g)$ that is an under-estimator of $C_g$ on $\mathcal{X}_g$. It is also the conjugate of the conjugate of $C_g$. 

Note that the UCED problem is separable across $g$ absent the system-wide constraints. We have:
\begin{theorem}
	Under Assumption \ref{assump}, (a) the optimal objective function value of the Lagrangian dual problem \eqref{opt:L_dual} equals the minimum of the following problem denoted by \textbf{\emph{CHP-Primal}}:
	\begin{IEEEeqnarray}{lllr}
\underset{\bm{p}_g, \bm{x}_g, \bm{u}_g, g \in \mathcal{G}}{\min} & \quad & \sum_{g \in \mathcal{G}} C_{g, \mathcal{X}_g}^{**}(\bm{p}_g, \bm{x}_g, \bm{u}_g)& \label{opt:CHP_start}\\
\rm{s.t.} && \sum_{g \in \mathcal{G}} \bm{p}_g = \bm{d} \label{opt:CHP_demand} \\
&&(\bm{p}_g, \bm{x}_g, \bm{u}_g) \in \emph{\text{conv}}(\mathcal{X}_g)  & \quad \forall g \in \mathcal{G} \label{opt:CHP_end},
\end{IEEEeqnarray}
and (b) an optimal dual vector associated with \eqref{opt:CHP_demand} is an optimal solution to \eqref{opt:L_dual}.
\label{thm:primal_formulation}

\end{theorem}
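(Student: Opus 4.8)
The plan is to prove the two claims together by showing that the Lagrangian dual function $q(\bm{\pi})$ is \emph{identical}, as a function of $\bm{\pi}$, to the Lagrangian dual function of \textbf{CHP-Primal} obtained by dualizing the balance constraint \eqref{opt:CHP_demand}, and then invoking strong duality for \textbf{CHP-Primal}, which is a convex program with polyhedral constraints. Part (a) then follows by chaining the two equalities, and part (b) follows because any optimal multiplier of \eqref{opt:CHP_demand} maximizes that dual function.

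First I would establish the per-unit identity: for every $g \in \mathcal{G}$ and every $\bm{\pi}$,
\[
\min_{(\bm{p}_g,\bm{x}_g,\bm{u}_g)\in\mathcal{X}_g}\bigl(C_g(\bm{p}_g,\bm{x}_g,\bm{u}_g)-\bm{\pi}^\intercal\bm{p}_g\bigr)
=\min_{(\bm{p}_g,\bm{x}_g,\bm{u}_g)\in\text{conv}(\mathcal{X}_g)}\bigl(C_{g,\mathcal{X}_g}^{**}(\bm{p}_g,\bm{x}_g,\bm{u}_g)-\bm{\pi}^\intercal\bm{p}_g\bigr).
\]
Two elementary facts drive this. (i) Adding an affine function commutes with taking the convex envelope over a fixed set, since subtracting an affine function preserves convexity; hence the convex envelope over $\mathcal{X}_g$ of $(\bm{p}_g,\bm{x}_g,\bm{u}_g)\mapsto C_g-\bm{\pi}^\intercal\bm{p}_g$ is exactly $C_{g,\mathcal{X}_g}^{**}-\bm{\pi}^\intercal\bm{p}_g$. (ii) For any $f$ bounded below on a set $S$, $\inf_S f=\inf_{\text{conv}(S)} f_S^{**}$, where $f_S^{**}$ is the convex envelope: ``$\ge$'' is immediate because $f_S^{**}\le f$ on $S\subseteq\text{conv}(S)$, and ``$\le$'' holds because the constant $\inf_S f$ is itself a convex under-estimator of $f$ on $S$, hence $\le f_S^{**}$ everywhere. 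Compactness of $\mathcal{X}_g$ and continuity of $C_g$ make the infima finite and attained. Summing over $g$ and adding $\bm{\pi}^\intercal\bm{d}$ shows that $q(\bm{\pi})$ equals the Lagrangian dual function of \textbf{CHP-Primal}, so $\max_{\bm{\pi}} q(\bm{\pi})$ equals the optimal value of the Lagrangian dual of \textbf{CHP-Primal}. (As a cross-check, this is consistent with Falk's characterization of the value function of \eqref{opt:L_dual} as $v^{**}(\bm{d})$.)

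Next I would argue strong duality for \textbf{CHP-Primal} together with attainment of the dual optimum. Since each $\mathcal{X}_g$ is a bounded mixed-integer set described by finitely many linear inequalities, $\text{conv}(\mathcal{X}_g)$ is a polytope, so the entire feasible set of \textbf{CHP-Primal} — intersecting these polytopes with the affine constraint \eqref{opt:CHP_demand} — is a polytope, nonempty whenever $\bm{d}$ is feasible. The objective $\sum_g C_{g,\mathcal{X}_g}^{**}$ is a proper convex function finite on this polytope; if needed, I would use the Carathéodory/convex-combination (perspective) reformulation of each envelope to rewrite \textbf{CHP-Primal} as an ordinary convex program over a polytope, so that no boundary lower-semicontinuity subtleties arise. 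Because all constraints are polyhedral, feasibility alone is a sufficient constraint qualification; strong duality holds and the dual optimum is attained. Hence $\min\textbf{CHP-Primal}$ equals the optimal value of its Lagrangian dual, which by the previous step equals $\max_{\bm{\pi}} q(\bm{\pi})$, proving (a); and any optimal multiplier $\bm{\pi}^*$ for \eqref{opt:CHP_demand} maximizes the Lagrangian dual function of \textbf{CHP-Primal}, which equals $q$, so $\bm{\pi}^*$ solves \eqref{opt:L_dual}, proving (b).

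I expect the main obstacle to be the strong-duality step, specifically a clean treatment of $C_{g,\mathcal{X}_g}^{**}$ on the relative boundary of $\text{conv}(\mathcal{X}_g)$ — that is, confirming it is a closed proper convex function (or sidestepping this via the lifted convex-combination representation) and verifying that the polyhedral constraint qualification genuinely yields dual attainment rather than merely zero duality gap. The identity $q(\bm{\pi})=\hat{q}(\bm{\pi})$ and the reductions around it are routine once facts (i)--(ii) are in place.
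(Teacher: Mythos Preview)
Your proposal is correct and follows essentially the same route as the paper: show that the Lagrangian dual of the original problem coincides with the Lagrangian dual of \textbf{CHP-Primal}, then close the gap via strong duality for the latter. The paper's proof is a two-line citation of Theorem 3.3 in Falk (1969) together with the observation that all constraints are linear, whereas you unpack the Falk-type identity directly via facts (i)--(ii) on convex envelopes; your version is more self-contained and makes explicit the polyhedral constraint qualification that the paper leaves implicit.
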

\begin{IEEEproof}
Since the private and system-wide constraints are defined by linear equalities and inequalities, strong duality holds between \textbf{CHP-Primal} and its Lagrangian dual problem. Therefore, Theorem \ref{thm:primal_formulation} holds by Theorem 3.3 in \cite{Falk1969}.
\end{IEEEproof}

Theorem \ref{thm:primal_formulation} suggests that if we have an explicit characterization of $C_{g, \mathcal{X}_g}^{**}(\bm{p}_g, \bm{x}_g, \bm{u}_g)$ and $\textrm{conv}(\mathcal{X}_g)$, we can solve \textbf{CHP-Primal} to obtain the dual maximizers of \eqref{opt:L_dual}. The convex hull prices are the optimal dual variables associated with the supply-demand balance constraints \eqref{opt:CHP_demand}. Schiro et al. \cite{Schiro2015} present an equivalent primal formulation. Van Vyve \cite{VanVyve2011} describes another primal formulation that is restricted to affine cost functions.

\subsection{Characterization of the Convex Hulls}

The convex hull of a compact set defined by linear inequalities and integrality requirements is a bounded polyhedron. In general, it is difficult to obtain an explicit description of the convex hull of a mixed-integer set defined by arbitrary linear constraints. The number of valid inequalities needed is typically exponential in the size of the input \cite{Cornuejols2007}. A general-purpose method proposed in \cite{Balas1998} is used in \cite{Schiro2015} to obtain a convex hull description of a unit's feasible set. This method applies to a feasible set defined by arbitrary linear constraints.  All feasible commitment decisions are enumerated in this method, and both the number of variables and number of constraints in the resulting description are exponential in the number of time periods. In addition, Van Vyve \cite{VanVyve2011} presents a more compact convex hull description of a unit's feasible set using $\mathcal{O}(T^3)$ variables.

Recent polyhedral studies of a unit's feasible set \cite{rajan2005minimum, Damc-Kurt2015, Pan2015} exploit its structure. We use special-purpose valid inequalities proposed in these studies to obtain a tractable description of $\text{conv}(\mathcal{X}_g)$, using only $\mathcal{O}(T)$ variables. 

Let $L_g$ and $l_g$, respectively, be the minimum up and minimum down times for unit $g$, and let $\underline{p}_{g}$ and $\overline{p}_{g}$ be the minimum and maximum generation levels for unit $g$. We consider feasible commitment and dispatch decisions of a unit limited by:
\begin{itemize}
\item state-transition constraints that represent the relationship between binary variables:
\begin{equation}
	u_{gt} \geq x_{gt} - x_{g,t-1}, \quad \forall t \in [2,T],\label{eq:state_transition}
\end{equation}
\item minimum up/down time constraints \cite{rajan2005minimum}:
\begin{IEEEeqnarray}{lll}
	\sum_{i = t - L_g + 1}^t u_{gi} \leq x_{gt}, &\forall t \in [L_g + 1, T], \label{eq:IBM_1}\\
\sum_{i = t - l_g + 1}^t u_{gi} \leq 1- x_{g,t-l_g},  \quad  & \forall t \in [l_g + 1, T], \label{eq:IBM_2}
\end{IEEEeqnarray}
\item dispatch level limits:
\begin{equation}
	x_{gt} \underline{p}_{g} \leq 	{p}_{gt} \leq 	x_{gt} \overline{p}_{g}, \quad \forall  t \in [1,T]. \label{eq:generation_capacity}
\end{equation}
\end{itemize}
Ramping constraints are not considered until Section \ref{sec:ramping}. Therefore, a unit's feasible set is
\begin{IEEEeqnarray}{C}
		\mathcal{X}_g =  \lbrace \bm{p}_g \in \mathbb{R}^{T}, \bm{x}_g \in \lbrace 0,1 \rbrace^{T}, \bm{u}_g \in \lbrace 0,1 \rbrace^{T-1} \,|\, 
		    \text{\eqref{eq:state_transition}--\eqref{eq:IBM_2}}, \eqref{eq:generation_capacity} \rbrace.
\end{IEEEeqnarray}

The set of feasible binary decisions alone is:
 \begin{equation}
	\mathcal{D}_g = \lbrace \bm{x}_g \in \lbrace 0,1 \rbrace^{T}, \bm{u}_g \in \lbrace 0,1 \rbrace^{T-1} \,|\,  \text{\eqref{eq:state_transition}--\eqref{eq:IBM_2}}  \rbrace.
\end{equation}
The following trivial inequalities are valid for $\mathcal{D}_g$:
\begin{equation}
u_{gt} \geq 0, \quad  \forall t  \in [2,T]. \label{eq:IBM_trivial} 
\end{equation}
It is claimed in \cite{rajan2005minimum} that
\begin{IEEEeqnarray}{C}
	\text{conv} (\mathcal{D}_g) = \lbrace \bm{x}_g \in \mathbb{R}^{T}, \bm{u}_g \in \mathbb{R}^{T-1} \, |    \text{\eqref{eq:state_transition}--\eqref{eq:IBM_2}},  \eqref{eq:IBM_trivial} \rbrace. \quad \label{eq:IBM_3}
\end{IEEEeqnarray}
While this statement is true, the proof of it relies on a lemma that states that all the extreme points of the polytope defined in \eqref{eq:IBM_3} are integral, Lemma 2.9 in \cite{rajan2005minimum}. The proof of Lemma 2.9 given in \cite{rajan2005minimum} is flawed, however.\footnote{We thank Dr.\ Dane A.\ Schiro and Dr.\ Eugene Litvinov for pointing this out to us.} We identify the flaws and give our own proof in Appendix. 

We also note that the minimum up/down time constraints in \cite{rajan2005minimum} have been used in recent literature on tight formulations of the UCED problem, such as \cite{Ostrowski2012,Morales-Espana2013,Morales-Espana2016}.

The following theorem extends the result in \eqref{eq:IBM_3} to also include the dispatch decisions:

\begin{theorem}
	The result shown in \eqref{eq:IBM_3} implies that
\begin{IEEEeqnarray*}{C}
	\mathcal{C}_g =  \lbrace \bm{p}_g \in \mathbb{R}^{T}, \bm{x}_g \in \mathbb{R}^{T}, \bm{u}_g \in \mathbb{R}^{T-1} \, | \, 
	   \text{\eqref{eq:state_transition}--\eqref{eq:generation_capacity}}, \eqref{eq:IBM_trivial}\rbrace
\end{IEEEeqnarray*}
describes the convex hull of $\mathcal{X}_g$.\footnote{For $T = 2$ and $T = 3$, the convex hull of a more general $\mathcal{X}_g$ with ramping constraints has been characterized in \cite{Pan2015}. The result here does not consider ramping constraints, but holds for an arbitrary $T$.}
\label{thm:explicit_hull}
\end{theorem}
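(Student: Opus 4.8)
The plan is to prove the set equality by establishing both inclusions, the nontrivial one by an explicit decomposition rather than an extreme-point argument. One inclusion is immediate: every point of $\mathcal{X}_g$ satisfies \eqref{eq:state_transition}--\eqref{eq:generation_capacity} and the trivial inequalities \eqref{eq:IBM_trivial}, so $\mathcal{X}_g \subseteq \mathcal{C}_g$; since $\mathcal{C}_g$ is a polyhedron and hence convex, $\text{conv}(\mathcal{X}_g) \subseteq \mathcal{C}_g$. For the reverse inclusion, fix a point $(\bm{p}_g,\bm{x}_g,\bm{u}_g) \in \mathcal{C}_g$. The constraints defining $\mathcal{C}_g$ that involve only the binary coordinates are exactly \eqref{eq:state_transition}--\eqref{eq:IBM_2} together with \eqref{eq:IBM_trivial}, so by the assumed identity \eqref{eq:IBM_3} we have $(\bm{x}_g,\bm{u}_g) \in \text{conv}(\mathcal{D}_g)$, and we may write $(\bm{x}_g,\bm{u}_g) = \sum_k \lambda_k (\bm{x}_g^k, \bm{u}_g^k)$ as a convex combination of points $(\bm{x}_g^k,\bm{u}_g^k) \in \mathcal{D}_g$ with all $\lambda_k > 0$.

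The key step is to lift this decomposition to the dispatch coordinates while keeping the ratio of output to commitment constant across the pieces. For each period $t$ with $x_{gt} > 0$, set $r_t := p_{gt}/x_{gt}$; the bounds \eqref{eq:generation_capacity} force $r_t \in [\underline{p}_g, \overline{p}_g]$. For a period $t$ with $x_{gt} = 0$, nonnegativity of the $x_{gt}^k$ forces $x_{gt}^k = 0$ for every $k$, and \eqref{eq:generation_capacity} forces $p_{gt} = 0$, so the value assigned to $r_t$ is immaterial (say $r_t := \underline{p}_g$). Now define $\bm{p}_g^k$ by $p_{gt}^k := x_{gt}^k r_t$ for all $t$. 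Each triple $(\bm{p}_g^k,\bm{x}_g^k,\bm{u}_g^k)$ then lies in $\mathcal{X}_g$: its binary part is in $\mathcal{D}_g$ by construction, and since $x_{gt}^k \in \{0,1\}$ and $r_t \in [\underline{p}_g,\overline{p}_g]$ we get $x_{gt}^k \underline{p}_g \le p_{gt}^k \le x_{gt}^k \overline{p}_g$, i.e.\ \eqref{eq:generation_capacity} holds. Moreover $\sum_k \lambda_k (\bm{p}_g^k,\bm{x}_g^k,\bm{u}_g^k)$ recovers $(\bm{p}_g,\bm{x}_g,\bm{u}_g)$: the binary coordinates match by choice of the $\lambda_k$, and for each $t$ one has $\sum_k \lambda_k p_{gt}^k = r_t \sum_k \lambda_k x_{gt}^k = r_t x_{gt} = p_{gt}$ when $x_{gt} > 0$ and $\sum_k \lambda_k p_{gt}^k = 0 = p_{gt}$ when $x_{gt} = 0$. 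Hence $(\bm{p}_g,\bm{x}_g,\bm{u}_g) \in \text{conv}(\mathcal{X}_g)$, so $\mathcal{C}_g \subseteq \text{conv}(\mathcal{X}_g)$ and the two sets coincide.

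I do not expect a genuine obstacle; the only point requiring care is the bookkeeping for periods with $x_{gt} = 0$, where the ratio $r_t$ is undefined, and checking that the constructed points land in $\mathcal{X}_g$ rather than merely in $\mathcal{C}_g$ — both are handled by the observation that $x_{gt} = 0$ forces $x_{gt}^k = 0$ for all $k$ and $p_{gt} = 0$. An alternative route is to show that every vertex of $\mathcal{C}_g$ has integral binary part by lifting a midpoint decomposition of a non-integral $(\bm{x}_g,\bm{u}_g)$ and invoking \eqref{eq:IBM_3}, then using that $\mathcal{C}_g$ is a polytope (its projection onto $(\bm{x}_g,\bm{u}_g)$ lies in the bounded set $\text{conv}(\mathcal{D}_g)$, after which $0 \le p_{gt} \le \overline{p}_g$ gives boundedness in the dispatch coordinates); this works as well but requires the extra boundedness remark, so I would present the direct construction above.
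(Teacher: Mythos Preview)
Your proof is correct and takes a genuinely different route from the paper. The paper argues via extreme points: it shows that at any vertex of $\mathcal{C}_g$, exactly $T$ of the active constraints must be of type \eqref{eq:generation_capacity}, fixing each $p_{gt}$ as a multiple of $x_{gt}$; the remaining $2T-1$ active constraints involve only $(\bm{x}_g,\bm{u}_g)$, and by \eqref{eq:IBM_3} any such vertex-determining system forces $(\bm{x}_g,\bm{u}_g)$ to be integral. This argument implicitly relies on $\mathcal{C}_g$ being a polytope and on a dimension count linking active constraints in the full space to a vertex of $\text{conv}(\mathcal{D}_g)$. Your approach bypasses all of that: you take an arbitrary point of $\mathcal{C}_g$, decompose its $(\bm{x}_g,\bm{u}_g)$-part via \eqref{eq:IBM_3}, and lift to the dispatch coordinates by the ratio $r_t = p_{gt}/x_{gt}$, producing an explicit convex combination of points in $\mathcal{X}_g$. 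The advantage of your construction is that it is more elementary and self-contained---no boundedness remark, no counting of linearly independent active constraints---and it makes transparent exactly how \eqref{eq:IBM_3} is used (only Carath\'eodory-type decomposability, not the vertex structure per se). The paper's extreme-point route, on the other hand, is the more standard polyhedral template and extends more mechanically if one later wants to append further constraints and ask whether integrality of vertices survives. Your handling of the $x_{gt}=0$ case is correct and is indeed the only delicate point.
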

\begin{IEEEproof}
	Since all the inequalities that describe $\mathcal{C}_g$ are valid, it suffices to prove that all extreme points of $\mathcal{C}_g$ have binary values for commitment and start-up variables.
	
	Suppose that $(\hat{\bm{p}}_g, \hat{\bm{x}}_g, \hat{\bm{u}}_g)$ is an extreme point of $\mathcal{C}_g$. By definition, among constraints  \eqref{eq:state_transition}--\eqref{eq:generation_capacity}, \eqref{eq:IBM_trivial}, $3T-1$ linearly independent constraints are active at $(\hat{\bm{p}}_g, \hat{\bm{x}}_g, \hat{\bm{u}}_g)$. Only constraints \eqref{eq:generation_capacity} involve vector $\bm{p}_g$. Therefore, out of the $3T-1$ active constraints, $T$ of them must be of type \eqref{eq:generation_capacity}.
	
For the $T$ active constraints of type \eqref{eq:generation_capacity}, either $\hat{p}_{gt} = \hat{x}_{gt} \underline{p}_{g}$ or $\hat{p}_{gt} = \hat{x}_{gt} \overline{p}_{g}$. Let $\lbrace \underline{\mathcal{T}},\overline{\mathcal{T}} \rbrace$ be a partition of the set $\lbrace 1,\ldots, T \rbrace$, so that $\hat{p}_{gt} = \hat{x}_{gt} \underline{p}_{g}, \, \forall t \in \underline{\mathcal{T}}$, and that $\hat{p}_{gt} = \hat{x}_{gt} \overline{p}_{g}, \, \forall t \in \overline{\mathcal{T}}$. It is easy to show that these active constraints are linearly independent. 

More importantly, the projection of 
\begin{IEEEeqnarray}{l}
			\lbrace \bm{p}_g \in \mathbb{R}^{T}, \bm{x}_g \in \mathbb{R}^{T}, \bm{u}_g \in \mathbb{R}^{T-1} \, | \, 
 {p}_{gt} = {x}_{gt} \underline{p}_{g}, \, \forall t \in \underline{\mathcal{T}} , \quad
			  {p}_{gt} = {x}_{gt} \overline{p}_{g}, \, \forall t \in \overline{\mathcal{T}}\rbrace \label{eq:set_projection}
	\end{IEEEeqnarray}
onto the $({\bm{x}}_g, {\bm{u}}_g)$-space is the whole of $\mathbb{R}^{T} \times \mathbb{R}^{T-1}$. To see this, consider any point $({\bm{x}}_g', {\bm{u}}_g') \in \mathbb{R}^{T} \times \mathbb{R}^{T-1}$. Let ${p}_{gt}' = {x}_{gt}' \underline{p}_{g}, \, \forall t \in \underline{\mathcal{T}}$, and let ${p}_{gt}' = {x}_{gt}' \overline{p}_{g}, \, \forall t \in \overline{\mathcal{T}}$. By construction, the point $({\bm{p}}_g', {\bm{x}}_g', {\bm{u}}_g')$ is in set \eqref{eq:set_projection}.

	In addition to the $T$ active constraints of type \eqref{eq:generation_capacity}, $2T-1$ other linearly independent constraints must be active at $(\hat{\bm{p}}_g, \hat{\bm{x}}_g, \hat{\bm{u}}_g)$. These $2T-1$ constraints can only be of type \eqref{eq:state_transition}--\eqref{eq:IBM_2} or \eqref{eq:IBM_trivial}. Because the projection of the set \eqref{eq:set_projection} onto the $({\bm{x}}_g, {\bm{u}}_g)$-space is $\mathbb{R}^{T} \times \mathbb{R}^{T-1}$, and because the number of such active constraints equals the number of variables, the values of $(\hat{\bm{x}}_g, \hat{\bm{u}}_g)$ are completely determined by these $2T-1$ active constraints. Lemma 2.9 in \cite{rajan2005minimum} implies that any such $2T-1$ constraints lead to binary $(\hat{\bm{x}}_g, \hat{\bm{u}}_g)$. 
\end{IEEEproof}

\subsection{Characterization of the Convex Envelopes}

In \textbf{CHP-Primal}, each cost function $C_g(\cdot)$ is replaced by its convex envelope taken over the non-convex feasible set $\mathcal{X}_g$. When a unit has a constant marginal cost, $C_g(\cdot)$ is affine, and the convex envelope of $C_g(\cdot)$ has the same functional form as $C_g(\cdot)$ itself. 

When $C_g(\cdot)$ is not affine (piecewise linear or quadratic in $\bm{p}_g$), its convex envelope has a different functional form. We first discuss the convex quadratic case. 

Let the start-up and no-load cost of unit $g$ be $h_g$ and $c_g$, respectively. Define the following set:
\begin{IEEEeqnarray*}{r}
	\mathcal{X}_{gt} =  \lbrace p_{gt} \in \mathbb{R}, x_{gt} \in \lbrace 0,1 \rbrace, u_{gt} \in \lbrace 0,1 \rbrace \,|\, 
	x_{gt} \underline{p}_{g} \leq 	{p}_{gt} \leq 	x_{gt} \overline{p}_{g} \rbrace.
\end{IEEEeqnarray*}
Suppose the offer cost function $C_{g}: \mathcal{X}_g \rightarrow \mathbb{R}$ is defined by:
\begin{equation}
C_g(\bm{p}_g, \bm{x}_g, \bm{u}_g) = \sum_{t = 1}^T C_{gt}({p}_{gt}, {x}_{gt}, {u}_{gt}), \label{eq:cost_function_quad}
\end{equation}
where for each period $t$, $C_{gt}: \mathcal{X}_{gt} \rightarrow \mathbb{R}$ is a convex quadratic function defined by a \emph{single-period cost function}:
\begin{equation}
 C_{gt}({p}_{gt}, {x}_{gt}, {u}_{gt}) =  a_gp_{gt}^2 + b_gp_{gt} + c_gx_{gt} + h_gu_{gt}, \label{eq:cost_function_quad_single_period}
\end{equation}	
where we assume $a_g >0$. The convex envelope of $C_g$ is characterized by the following theorem.\footnote{Akt\"{u}rk et al. \cite{Akturk2009} describe a similar result that considers a more general polynomial function. However, our result considers a multi-period domain, and is more general in another direction. We thank Dr.\ Alper Atamt\"{u}rk for pointing this out to us.}

\begin{theorem}
\label{thm:convex_envelope_quadratic}
	The convex envelope of the quadratic cost function $C_g$ taken over $\mathcal{X}_g$ is
the function $C_{g, \mathcal{X}_g}^{**}: \text{\emph{conv}}(\mathcal{X}_g) \rightarrow \mathbb{R}$ defined by the following:
	\begin{equation*}
C_{g, \mathcal{X}_g}^{**}(\bm{p}_g, \bm{x}_g, \bm{u}_g) = 	\sum_{t = 1}^T C_{gt, \mathcal{X}_{gt}}^{**}({p}_{gt}, {x}_{gt}, {u}_{gt}),
	\end{equation*}
	where $C_{gt, \mathcal{X}_{gt}}^{**}: \text{\emph{conv}}(\mathcal{X}_{gt}) \rightarrow \mathbb{R}$ is defined by the following:
	\begin{IEEEeqnarray*}{c}
	C_{gt, \mathcal{X}_{gt}}^{**}({p}_{gt}, {x}_{gt}, {u}_{gt}) 
= \begin{cases}
		a_g \frac{p_{gt}^2}{x_{gt}} + b_gp_{gt} + c_gx_{gt} + h_gu_{gt}, &x_{gt}>0,\\
		0,& x_{gt} = 0.
	\end{cases}
	\end{IEEEeqnarray*}
\end{theorem}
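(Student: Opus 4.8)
The plan is to reduce the $T$-period convex envelope to $T$ single-period envelopes, each of which is a perspective transformation. Two features of the problem make this work. First, in $\mathcal{X}_g$ the only coupling across time periods lies in the binary block $(\bm{x}_g,\bm{u}_g)$ (through \eqref{eq:state_transition}--\eqref{eq:IBM_2}), while the dispatch levels are constrained only period by period, through \eqref{eq:generation_capacity}. Second, the part of $C_g$ that depends on the binaries is affine. So I would first settle the single-period statement and then lift it to the claimed separable form, using Theorem~\ref{thm:explicit_hull} to identify the domain $\text{conv}(\mathcal{X}_g)=\mathcal{C}_g$.

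Fix a period $t$. Since $b_g p_{gt}+c_g x_{gt}+h_g u_{gt}$ is affine, it is its own convex envelope over $\mathcal{X}_{gt}$ and may be split off, leaving the convex envelope of $p_{gt}\mapsto a_g p_{gt}^2$ over $\{(p_{gt},x_{gt}) : x_{gt}\in\{0,1\},\ x_{gt}\underline{p}_g\leq p_{gt}\leq x_{gt}\overline{p}_g\}$, with $u_{gt}$ a free coordinate that does not enter. I would describe the epigraph of that envelope as the convex hull of the ray $\{x_{gt}=0,\ p_{gt}=0,\ w\geq 0\}$ together with $\{x_{gt}=1,\ \underline{p}_g\leq p_{gt}\leq\overline{p}_g,\ w\geq a_g p_{gt}^2\}$, and identify it with the epigraph of the perspective function $(p_{gt},x_{gt})\mapsto a_g p_{gt}^2/x_{gt}$ on $\{0\leq x_{gt}\leq 1,\ x_{gt}\underline{p}_g\leq p_{gt}\leq x_{gt}\overline{p}_g\}$, taken to be $0$ at $x_{gt}=0$. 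That this perspective function is the envelope follows from three routine observations: it is convex, being the perspective of a convex function; it lies below $a_g p_{gt}^2$ on $\mathcal{X}_{gt}$; and no convex under-estimator can exceed it, since any point with $0<x_{gt}<1$ and $x_{gt}\underline{p}_g\leq p_{gt}\leq x_{gt}\overline{p}_g$ equals the convex combination $x_{gt}\,(p_{gt}/x_{gt},1)+(1-x_{gt})(0,0)$ of points of $\mathcal{X}_{gt}$, whose $a_g(\cdot)^2$-values average to exactly $a_g p_{gt}^2/x_{gt}$. Re-adding the affine part gives the first branch of the formula when $x_{gt}>0$; when $x_{gt}=0$ the dispatch bound forces $p_{gt}=0$, the perspective term vanishes, and the value is that of the remaining affine part.

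To lift this to the theorem, I would prove the two inequalities between $C_{g, \mathcal{X}_g}^{**}$ and $\sum_{t=1}^{T} C_{gt, \mathcal{X}_{gt}}^{**}$ on $\mathcal{C}_g$. One direction is immediate: the sum $\sum_{t} C_{gt, \mathcal{X}_{gt}}^{**}$ is convex on $\mathcal{C}_g$ (a sum of convex functions of disjoint blocks of variables, the $t$-th block's coordinates lying in $\text{conv}(\mathcal{X}_{gt})$ by \eqref{eq:generation_capacity} and the boundedness of $\mathcal{C}_g$) and it under-estimates $C_g$ on $\mathcal{X}_g$ term by term, so it lies below the largest convex under-estimator, $C_{g, \mathcal{X}_g}^{**}$. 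For the reverse inequality I would use the description of the convex envelope as the pointwise minimum, over all ways of writing $(\bm{p}_g,\bm{x}_g,\bm{u}_g)=\sum_k\lambda_k(\bm{p}_g^{k},\bm{x}_g^{k},\bm{u}_g^{k})$ with $(\bm{p}_g^{k},\bm{x}_g^{k},\bm{u}_g^{k})\in\mathcal{X}_g$, of $\sum_k\lambda_k C_g(\bm{p}_g^{k},\bm{x}_g^{k},\bm{u}_g^{k})$ --- legitimate since $\mathcal{X}_g$ is compact and $C_g$ is continuous --- and exhibit a representation attaining $\sum_{t} C_{gt, \mathcal{X}_{gt}}^{**}$. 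Concretely, the binaries $(\bm{x}_g,\bm{u}_g)$ lie in $\text{conv}(\mathcal{D}_g)$ (the projection of $\mathcal{C}_g$); write them as $\sum_j\mu_j(\bm{x}_g^{j},\bm{u}_g^{j})$ with $(\bm{x}_g^{j},\bm{u}_g^{j})\in\mathcal{D}_g$; then set $p_{gt}^{j}:=x_{gt}^{j}\,p_{gt}/x_{gt}$ when $x_{gt}>0$ and $p_{gt}^{j}:=0$ (which is forced) when $x_{gt}=0$. Using the dispatch bounds that hold on $\mathcal{C}_g$, each $(\bm{p}_g^{j},\bm{x}_g^{j},\bm{u}_g^{j})$ lies in $\mathcal{X}_g$ and $\sum_j\mu_j\bm{p}_g^{j}=\bm{p}_g$; and because $x_{gt}^{j}\in\{0,1\}$ makes $(x_{gt}^{j})^2=x_{gt}^{j}$, the cost of this representation collapses, period by period, to $a_g p_{gt}^2/x_{gt}+b_g p_{gt}+c_g x_{gt}+h_g u_{gt}$ (to the affine part when $x_{gt}=0$), that is, to $\sum_{t} C_{gt, \mathcal{X}_{gt}}^{**}$.

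The step I expect to be the main obstacle is this last construction. The per-period output levels $p_{gt}^{j}$ are chosen independently across $t$, yet they must assemble into a single global convex combination of points of $\mathcal{X}_g$, which forces the binary components $(\bm{x}_g^{j},\bm{u}_g^{j})$ to lie in $\mathcal{D}_g$ and not merely in the box $\{0\leq x_{gt}^{j}\leq 1\}$. That is precisely where the integrality of the vertices of the minimum up/down polytope --- equation~\eqref{eq:IBM_3}, invoked through Theorem~\ref{thm:explicit_hull} --- enters: it lets us begin from an honest decomposition of $(\bm{x}_g,\bm{u}_g)$ into members of $\mathcal{D}_g$ and only afterward distribute the dispatch levels across periods. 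Everything else --- the perspective computation, splitting off the affine part, and the easy inequality --- is routine.
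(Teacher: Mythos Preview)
Your argument is correct. The single-period part is essentially the paper's: both identify the envelope of the quadratic piece with its perspective and note that the affine part $b_gp_{gt}+c_gx_{gt}+h_gu_{gt}$ passes through unchanged. Where you diverge is in the lifting from one period to $T$. The paper introduces the Cartesian product $\mathcal{X}'_g=\prod_{t=1}^T\mathcal{X}_{gt}\supseteq\mathcal{X}_g$, observes that constraints \eqref{eq:state_transition}--\eqref{eq:IBM_2} act only on the binary block and therefore $(\mathcal{X}'_g\setminus\mathcal{X}_g)\cap\text{conv}(\mathcal{X}_g)=\emptyset$, hence $\mathcal{X}'_g\cap\text{conv}(\mathcal{X}_g)=\mathcal{X}_g$; it then argues that the convex hull of the epigraph of $C_g$ over $\mathcal{X}_g$, restricted to $\text{conv}(\mathcal{X}_g)$, coincides with that over $\mathcal{X}'_g$, where separability gives the sum of single-period envelopes directly. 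Your route instead proves the two inequalities on $\mathcal{C}_g$ separately, the nontrivial one by an explicit Carath\'eodory-type construction: decompose $(\bm{x}_g,\bm{u}_g)$ into integral points of $\mathcal{D}_g$ via \eqref{eq:IBM_3}, assign $p_{gt}^{j}=x_{gt}^{j}\,p_{gt}/x_{gt}$, and exploit $(x_{gt}^{j})^2=x_{gt}^{j}$ to collapse the averaged cost to the perspective value. The paper's argument is shorter and does not need the integrality of the minimum up/down polytope---only that \eqref{eq:state_transition}--\eqref{eq:IBM_2} are valid linear inequalities for $\mathcal{X}_g$; your argument is more constructive, makes the dependence on \eqref{eq:IBM_3} and Theorem~\ref{thm:explicit_hull} explicit, and supplies a concrete witness to the claimed envelope value at every point of $\mathcal{C}_g$.
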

\begin{IEEEproof}
To develop intuition for this proof, we first consider a restricted case where there is only a single period with the start-up variable fixed at zero. Such restriction allows us to plot the graph of the cost function and its convex envelope in a three-dimension space. We then complete the proof by considering the general multi-period case.\footnote{We thank Jian Sun from Tsinghua University for pointing out a missing step in a previous version of this proof.}

First, consider the single-period cost function $C_{gt}$ with $u_{gt}$ fixed at zero. Consider the value of the function:
\begin{equation}
C_{gt, \mathcal{X}_{gt}}^{**}({p}_{gt}, {x}_{gt}, 0) =
	\begin{cases}
		a_g \frac{p_{gt}^2}{x_{gt}} + b_gp_{gt} + c_gx_{gt}, &x_{gt}>0,\\
		0,& x_{gt} = 0,
	\end{cases}
	\label{eq:envelope_single_period}
	\end{equation}
as $({p}_{gt}, {x}_{gt})$ varies over
$\lbrace p_{gt} \in \mathbb{R}, x_{gt} \in [0,1] \,| \,	x_{gt} \underline{p}_{g} \leq 	{p}_{gt} \leq 	x_{gt} \overline{p}_{g}\rbrace$. 
When $x_{gt} \in \lbrace 0,1 \rbrace$, the graph of this function is the same as  $C_{gt}({p}_{gt}, {x}_{gt}, 0) $. 
The graph of this function at any point $({p}_{gt}, {x}_{gt})$ with ${x}_{gt} \in (0,1)$ is determined by the line segment connecting $(0,0,0)$ and $(\frac{p_{gt}}{x_{gt}}, 1, C_{gt}(\frac{p_{gt}}{x_{gt}}, 1, 0))$. This function is continuous and convex on $\lbrace p_{gt} \in \mathbb{R}, x_{gt} \in [0,1] \,| \,	x_{gt} \underline{p}_{g} \leq 	{p}_{gt} \leq 	x_{gt} \overline{p}_{g}\rbrace$, as can be verified by taking its Hessian in this domain. 

We prove by contradiction that among the convex under-estimators of  $C_{gt}({p}_{gt}, {x}_{gt}, 0)$ on the given domain, $C_{gt, \mathcal{X}_{gt}}^{**}({p}_{gt}, {x}_{gt}, 0)$ is the largest one. Suppose not, then there exists a convex under-estimator of $C_{gt}({p}_{gt}, {x}_{gt}, 0)$, denoted by $C'_{gt}({p}_{gt}, {x}_{gt}, 0)$, for which there exist a point $({p}'_{gt}, {x}'_{gt})$ with ${x}'_{gt} \in (0,1)$ such that $C'_{gt}({p}'_{gt}, {x}'_{gt}, 0) > C^{**}_{gt}({p}'_{gt}, {x}'_{gt}, 0)$. 

Now we focus on the graph of the functions, which exists in a space of dimension four. Consider the line interval in the graph space connecting $(0,0,0,0)$ and $(\frac{p'_{gt}}{x'_{gt}}, 1,0,C_{gt}(\frac{p'_{gt}}{x'_{gt}}, 1, 0))$. It is easy to verify that the graph of the function at $({p}'_{gt}, {x}'_{gt}, 0)$ lies on this line interval. We have $C'_{gt}({p}'_{gt}, {x}'_{gt}, 0) > C^{**}_{gt}({p}'_{gt}, {x}'_{gt}, 0) = 0 + {x}'_{gt}  C_{gt}(\frac{p'_{gt}}{x'_{gt}}, 1, 0)$, which implies that $C'_{gt}$ is not convex when restricted to this line. This contradicts the convexity of $C'_{gt}$, since a function is convex if and only if it is convex when restricted to any line that intersects its domain \cite[Chapter 3]{Boyd2004}. Therefore, $C_{gt, \mathcal{X}_{gt}}^{**}({p}_{gt}, {x}_{gt}, 0)$ is the convex envelope of $C_{gt}({p}_{gt}, {x}_{gt}, 0)$.

Fig. \ref{fig:convex_envelope} shows an example of $C_{gt}({p}_{gt}, {x}_{gt}, 0)$ and its convex envelope. Since $C_{gt}$ is affine in $u_{gt}$, $C_{gt, \mathcal{X}_{gt}}^{**}({p}_{gt}, {x}_{gt}, {u}_{gt})$ is the convex envelope of $C_{gt}({p}_{gt}, {x}_{gt}, {u}_{gt})$.

\begin{figure}
	\centering 
	\includegraphics[width=0.5\textwidth]{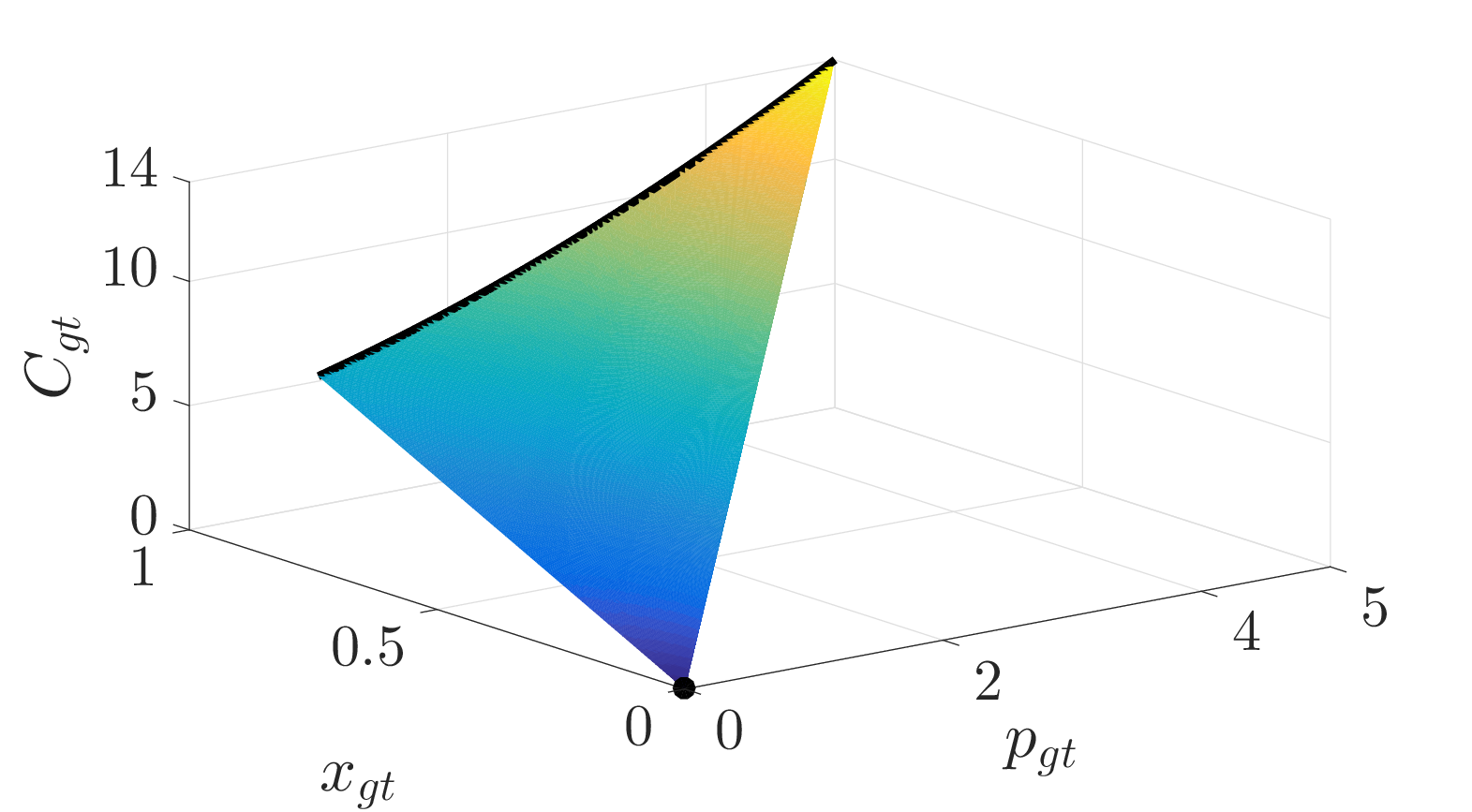}
	\caption{Convex envelope of a single-period cost function. The figure shows the graph of a single-period cost function $0.2p_{gt}^2 + p_{gt} + 4x_{gt}$ defined on $\lbrace x_{gt} \in \lbrace 0,1 \rbrace, p_{gt} \in \mathbb{R} \,| \, x_{gt} \leq p_{gt} \leq 5x_{gt}\rbrace$ (black dot and black curved line), together with its convex envelope (the colored surface).}\label{fig:convex_envelope}
\end{figure}
	
Next, we consider the general multi-period case. Suppose that $C_{g, \mathcal{X}_g}^{**}(\bm{p}_g, \bm{x}_g, \bm{u}_g)$ is not the largest convex under-estimator of $C_g(\bm{p}_g, \bm{x}_g, \bm{u}_g)$. Then, there exists a convex under-estimator $C^\dagger_g(\bm{p}_g, \bm{x}_g, \bm{u}_g)$ of $C_g(\bm{p}_g, \bm{x}_g, \bm{u}_g)$, for which there exists a point $(\bm{p}_g^{\dagger}, \bm{x}_g^{\dagger}, \bm{u}_g^{\dagger}) \in \text{conv}(\mathcal{X}_g)$ with $\bm{0} < \bm{x}_g^{\dagger} < \bm{1}$ such that $C^\dagger_g(\bm{p}_g^{\dagger}, \bm{x}_g^{\dagger}, \bm{u}_g^{\dagger}) > C_{g, \mathcal{X}_g}^{**}(\bm{p}_g^{\dagger}, \bm{x}_g^{\dagger}, \bm{u}_g^{\dagger})$.
	
	 Let us construct a point, $(\bm{p}_g^{\dagger\dagger}, \bm{x}_g^{\dagger\dagger}, \bm{u}_g^{\dagger\dagger})$ where
		\begin{IEEEeqnarray*}{ll}
	p_{gt}^{\dagger\dagger} = \begin{cases}
		\frac{p_{gt}^\dagger}{x_{gt}^\dagger} , &x_{gt}^\dagger>0,\\
		0,& x_{gt}^\dagger= 0,
	\end{cases}
	\end{IEEEeqnarray*}
	
			\begin{IEEEeqnarray*}{ll}
	x_{gt}^{\dagger\dagger} = \begin{cases}
		1, &x_{gt}^\dagger>0,\\
		0,& x_{gt}^\dagger= 0,
	\end{cases}
	\end{IEEEeqnarray*}	
	and 
		\begin{IEEEeqnarray*}{l}
	u_{gt}^{\dagger\dagger} = x_{gt}^{\dagger\dagger} - x_{g,t-1}^{\dagger\dagger}, \quad \forall t \in [2,T].
	\end{IEEEeqnarray*}	
We can see that $(\bm{p}_g^{\dagger\dagger}, \bm{x}_g^{\dagger\dagger}, \bm{u}_g^{\dagger\dagger}) \in \mathcal{X}_g$ because $\bm{x}_g^{\dagger\dagger}$ is integral, and that $\bm{u}_g^{\dagger\dagger}$ is constructed via the state-transition relationship between the startup and the commitment variables.

Because $C^\dagger_g$ is an under-estimator, 
\begin{IEEEeqnarray*}{c}
C^\dagger_g(\bm{p}_g^{\dagger\dagger}, \bm{x}_g^{\dagger\dagger}, \bm{u}_g^{\dagger\dagger}) \leq C_g(\bm{p}_g^{\dagger\dagger}, \bm{x}_g^{\dagger\dagger}, \bm{u}_g^{\dagger\dagger});
\end{IEEEeqnarray*}	
because $C^\dagger_g$ is a larger under-estimator than $C_{g, \mathcal{X}_g}^{**}$, we have 
\begin{IEEEeqnarray*}{c}
C^\dagger_g(\bm{p}_g^{\dagger\dagger}, \bm{x}_g^{\dagger\dagger}, \bm{u}_g^{\dagger\dagger}) \geq C_{g, \mathcal{X}_g}^{**}(\bm{p}_g^{\dagger\dagger}, \bm{x}_g^{\dagger\dagger}, \bm{u}_g^{\dagger\dagger}) = C_g(\bm{p}_g^{\dagger\dagger}, \bm{x}_g^{\dagger\dagger}, \bm{u}_g^{\dagger\dagger}).
\end{IEEEeqnarray*}	
The two equations above imply 
\begin{IEEEeqnarray*}{c}
C^\dagger_g(\bm{p}_g^{\dagger\dagger}, \bm{x}_g^{\dagger\dagger}, \bm{u}_g^{\dagger\dagger}) = C_{g, \mathcal{X}_g}^{**}(\bm{p}_g^{\dagger\dagger}, \bm{x}_g^{\dagger\dagger}, \bm{u}_g^{\dagger\dagger}) = C_g(\bm{p}_g^{\dagger\dagger}, \bm{x}_g^{\dagger\dagger}, \bm{u}_g^{\dagger\dagger}).
\end{IEEEeqnarray*}	
Similarly, we have 
\begin{IEEEeqnarray*}{c}
C^\dagger_g(\bm{0}, \bm{0}, \bm{0}) = C_{g, \mathcal{X}_g}^{**}(\bm{0}, \bm{0}, \bm{0}) = C_g(\bm{0}, \bm{0},\bm{0}) = 0.
\end{IEEEeqnarray*}	

Now let us focus on the graph of these convex under-estimators, which exist in a space of dimension $3T$ with the last dimension being the value of the function. Consider the line interval between $(\bm{0},\bm{0},\bm{0}, 0)$ and $(\bm{p}_g^{\dagger\dagger}, \bm{x}_g^{\dagger\dagger}, \bm{u}_g^{\dagger\dagger}, C_{g, \mathcal{X}_g}^{**}(\bm{p}_g^{\dagger\dagger}, \bm{x}_g^{\dagger\dagger}, \bm{u}_g^{\dagger\dagger}))$. It is easy to verify that the point $(\bm{p}_g^{\dagger}, \bm{x}_g^{\dagger}, \bm{u}_g^{\dagger}, C^\dagger_g(\bm{p}_g^{\dagger}, \bm{x}_g^{\dagger}, \bm{u}_g^{\dagger}) )$ lies on this line interval. 

The inequality  $C^\dagger_g(\bm{p}_g^{\dagger}, \bm{x}_g^{\dagger}, \bm{u}_g^{\dagger}) > C_{g, \mathcal{X}_g}^{**}(\bm{p}_g^{\dagger}, \bm{x}_g^{\dagger}, \bm{u}_g^{\dagger})$ implies that the point on the graph of $C^\dagger_g$, $(\bm{p}_g^{\dagger}, \bm{x}_g^{\dagger}, \bm{0}, C_{g, \mathcal{X}_g}^\dagger(\bm{p}_g^{\dagger}, \bm{x}_g^{\dagger}, \bm{0}))$, lies above the linear combination of the two points on the same graph, $(\bm{0},\bm{0},\bm{0}, 0)$ and
\begin{equation*}
	(\bm{p}_g^{\dagger\dagger}, \bm{x}_g^{\dagger\dagger}, \bm{u}_g^{\dagger\dagger}, C^\dagger_g(\bm{p}_g^{\dagger\dagger}, \bm{x}_g^{\dagger\dagger}, \bm{u}_g^{\dagger\dagger} )).
\end{equation*}
Since a function is convex if and only if it is convex when restricted to any line that intersects its domain, this in turn implies that $C^\dagger_g(\bm{p}_g, \bm{x}_g, \bm{u}_g)$ is not convex, a contradiction.

\end{IEEEproof}

Using the convex envelopes gives us a better lower bound than simply keeping the functional form of $C_{gt, \mathcal{X}_{gt}}$ and relaxing its domain. Given $g \in \mathcal{G}$, $t \in [1,T]$, and $p_{gt} > 0$, when $x_{gt}$ is binary, $C_{gt, \mathcal{X}_{gt}}^{**}({p}_{gt}, {x}_{gt}, 0) = C_{gt, \mathcal{X}_{gt}}({p}_{gt}, {x}_{gt}, 0)$; when $x_{gt}$ is fractional, we have $\frac{p_{gt}^2}{x_{gt}}> {p_{gt}^2}$.

We next consider convex piecewise linear cost functions. Suppose the interval $[\underline{p}_g, \overline{p}_g]$ is partitioned into $|\mathcal{K}|$ intervals:
\begin{equation}
	[\underline{p}_g, \overline{p}_g] = \bigcup_{k \in \mathcal{K}} \mathcal{I}_k,
\end{equation}
where $k$ is the index for the partitioned intervals.

Recalling that $h_g$ denotes the start-up cost of unit $g$, suppose at each period $t$, when $p_{gt} \in \mathcal{I}_k$, the operating cost (excluding start-up cost and no-load cost) is
\begin{equation}
	\tilde{C}_{gt}({p}_{gt}, 1, 0) =  a_{gk}p_{gt} + b_{gk}. \label{eq:cost_function_lin_single_period}
\end{equation}
Introducing an auxiliary variable $s_{gt}\in \mathbb{R}_+$, the single-period piecewise linear cost function can be implemented as
\begin{equation}
	\tilde{C}_{gt}({p}_{gt}, {x}_{gt}, {u}_{gt}) = s_{gt} + c_gx_{gt} + h_gu_{gt}, \label{eq:PWL1}
\end{equation}
\begin{equation}
	s_{gt} \geq  a_{gk}p_{gt} + b_{gk}, \forall k \in \mathcal{K}. \label{eq:PWL2}
\end{equation}

The convex envelope of a convex piecewise linear cost function taken over $\mathcal{X}_g$ is also piecewise linear. The proof is similar to that for Theorem \ref{thm:convex_envelope_quadratic}.
\begin{theorem}
\label{thm:convex_envelope_lin}
	The convex envelope of the convex piecewise linear cost function $\tilde{C}_{gt}$ taken over $\mathcal{X}_g$ is the function $\tilde{C}_{g, \mathcal{X}_g}^{**}: \text{\emph{conv}}(\mathcal{X}_g) \rightarrow \mathbb{R}$ defined by:
	\begin{IEEEeqnarray*}{c}
 \tilde{C}_{g, \mathcal{X}_g}^{**}(\bm{p}_g, \bm{x}_g, \bm{u}_g) 
=   \sum_{t = 1}^T
\begin{cases}
		 a_{gk}p_{gt} + (c_{g}+b_{gk})x_{gt} + h_gu_{gt}, &\text{\emph{if}} \,\,  \frac{p_{gt}}{x_{gt}} \in \mathcal{I}_k, \\
		 0,& \text{\emph{if}} \,\,  x_{gt} = 0.
	\end{cases}
	\end{IEEEeqnarray*}
\end{theorem}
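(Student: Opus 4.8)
The plan is to follow the proof of Theorem~\ref{thm:convex_envelope_quadratic} essentially verbatim, the only change being that the single-period ``on'' cost is now a convex piecewise linear function instead of a convex quadratic one. As there, I would first reduce to a single period: with $\mathcal{X}_g' = \prod_{t=1}^T \mathcal{X}_{gt}$, the summed objective $\sum_t \tilde C_{gt}$ and the set $\mathcal{X}_g'$ are both separable across $t$, so the convex envelope of $\sum_t \tilde C_{gt}$ over $\mathcal{X}_g'$ is the sum over $t$ of the single-period envelopes; and since $\mathcal{X}_g' \cap \mathrm{conv}(\mathcal{X}_g) = \mathcal{X}_g$ (shown in the proof of Theorem~\ref{thm:convex_envelope_quadratic}), taking convex hulls of epigraphs shows that on $\mathrm{conv}(\mathcal{X}_g)$ the envelope over $\mathcal{X}_g$ coincides with the envelope over $\mathcal{X}_g'$. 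Hence it suffices to compute, for each $t$, the convex envelope of the single-period cost over $\mathcal{X}_{gt}$.

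For the single-period computation I would fix $u_{gt}=0$ first (the cost is affine in $u_{gt}$, so $h_g u_{gt}$ is simply added back at the end, exactly as in Theorem~\ref{thm:convex_envelope_quadratic}). Writing $f(p) = \max_{k\in\mathcal{K}}(a_{gk}p + b_{gk})$ for the convex piecewise linear on-cost on $[\underline{p}_g,\overline{p}_g]$, the single-period cost equals $f(p_{gt}) + c_g$ at $x_{gt}=1$ and equals $0$ at $x_{gt}=0$ (which forces $p_{gt}=0$). I claim its convex envelope on $\mathrm{conv}(\mathcal{X}_{gt})$ is the perspective-type function $\phi(p_{gt},x_{gt}) = x_{gt} f(p_{gt}/x_{gt}) + c_g x_{gt}$ for $x_{gt}>0$, with $\phi=0$ for $x_{gt}=0$. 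Multiplying through, $\phi(p_{gt},x_{gt}) = \max_{k\in\mathcal{K}}\bigl(a_{gk}p_{gt} + (b_{gk}+c_g)x_{gt}\bigr)$, which exhibits $\phi$ as a maximum of linear functions, hence convex and continuous on all of $\mathbb{R}^2$ (in particular on $\mathrm{conv}(\mathcal{X}_{gt})$, with no special treatment needed at $x_{gt}=0$); moreover, when $p_{gt}/x_{gt}\in\mathcal{I}_k$ the $k$-th piece is active, so $\phi$ agrees with the $k$-th case in the theorem statement. That $\phi$ under-estimates the cost is immediate: at $x_{gt}=1$ it equals $f(p_{gt})+c_g$, and at $x_{gt}=0$ it equals $0$.

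It remains to show $\phi$ is the \emph{largest} convex under-estimator, and here I would reuse the contradiction argument from Theorem~\ref{thm:convex_envelope_quadratic} line for line. If some convex under-estimator $C'$ satisfied $C'(p',x',0) > \phi(p',x')$ at a point with $x'\in(0,1)$, then, since $x'\underline{p}_g \le p' \le x'\overline{p}_g$, both $(0,0)$ and $(p'/x',1)$ lie in $\mathrm{conv}(\mathcal{X}_{gt})$ and $(p',x')$ is their convex combination with weight $x'$; convexity of $C'$ along that segment, together with $C'(0,0,0)\le 0$ and $C'(p'/x',1,0)\le f(p'/x')+c_g$, gives $C'(p',x',0)\le x'\bigl(f(p'/x')+c_g\bigr) = \phi(p',x')$, a contradiction (violations at $x'\in\{0,1\}$ are ruled out directly since $\phi$ equals the cost there). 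Re-introducing $h_g u_{gt}$ by affineness and then summing over $t$ via the separability reduction of the first paragraph yields the stated formula for $\tilde{C}_{g,\mathcal{X}_g}^{**}$.

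Because this is a direct adaptation of Theorem~\ref{thm:convex_envelope_quadratic}, no step is genuinely hard; the points that need a little care are (i) checking that the case split in the statement is unambiguous at the breakpoints of the partition $\{\mathcal{I}_k\}$ — which holds because the active affine pieces coincide there, $f$ being convex — and (ii) being explicit about what $\tilde{C}_{gt}$ evaluates to when $x_{gt}=0$, given that it is defined through the epigraph variable $s_{gt}$ and the inequalities \eqref{eq:PWL1}--\eqref{eq:PWL2}; once one adopts the same convention as in the quadratic case (that the ``off'' cost is $0$), the perspective construction and the maximality argument go through unchanged. The only mildly delicate technical point is handling the boundary $x_{gt}=0$ of the domain, which is dispatched by the observation that the generation-limit constraints keep $p_{gt}/x_{gt}$ bounded, so $\phi$ extends continuously there.
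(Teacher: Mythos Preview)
Your proposal is correct and follows exactly the route the paper intends: the paper's own proof of Theorem~\ref{thm:convex_envelope_lin} consists solely of the remark that it is ``similar to that for Theorem~\ref{thm:convex_envelope_quadratic},'' and you have carried out precisely that adaptation (perspective construction, line-segment maximality argument, separability reduction via $\mathcal{X}_g'\cap\mathrm{conv}(\mathcal{X}_g)=\mathcal{X}_g$). If anything, your write-up is more detailed than the paper's, and your observation that $\phi$ is a finite maximum of linear functions is a clean way to get convexity and continuity at $x_{gt}=0$ without a Hessian check.
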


Typically, $b_{gk}$ is non-positive. Therefore, similar to the quadratic case, using the convex envelopes gives us a better lower bound than the implementation shown in \eqref{eq:PWL1} and \eqref{eq:PWL2}.

\subsection{Reformulation and Polynomial-Time Solution}

Theorems \ref{thm:primal_formulation}--\ref{thm:convex_envelope_lin} imply that, if each unit faces only generation limits and minimum up/down constraints, exact convex hull prices can be determined by solving \textbf{CHP-Primal} with the convex hulls and convex envelopes explicitly described. All variables are continuous in \textbf{CHP-Primal}. 

When the cost functions are quadratic, non-linearity of the convex envelope comes from the quadratic-over-linear terms $a_g \frac{p_{gt}^2}{x_{gt}}$ which are known to be convex. Moreover, we can move these terms from the objective into constraints and cast \textbf{CHP-Primal} as a second-order cone program (SOCP). 

For each $g$ and $t$, we replace $a_g \frac{p_{gt}^2}{x_{gt}}$ by a new variable $s_{gt} \in \mathbb{R}_+$ and introduce the following constraint: 
\begin{equation}
s_{gt} x_{gt} \geq a_g p_{gt}^2. \label{eq:quad_over_linear}
\end{equation}
Since we are minimizing, when $x_{gt} = 0$, the optimal value for $s_{gt}$ is zero, which is consistent with the convex envelope. For $x_{gt} \geq 0$ and $s_{gt} \geq 0$, constraint \eqref{eq:quad_over_linear} is equivalent to
\begin{equation}
\lVert (2\sqrt{a_g}p_{gt}, x_{gt} - s_{gt})\rVert_2 \leq  x_{gt} + s_{gt},
\end{equation}
which is a second-order cone constraint \cite{Lobo1998}. With this reformulation technique, \textbf{CHP-Primal} can be cast as an SOCP, which can be solved in polynomial time using off-the-shelf interior-point solvers, e.g. GUROBI \cite{gurobi}. 

In the case where the cost functions are piecewise linear, the convex envelope of the cost function is convex piecewise linear. The resulting \textbf{CHP-Primal} is a linear program (LP). 

Since the number of constraints in our explicit formulation is polynomial in $T$ and $|\mathcal{G}|$, the convex hull pricing problem can be solved as a convex program in polynomial time in both cases. Note that in an optimal solution to \textbf{CHP-Primal}, the commitment and start-up variables can be fractional. In convex hull pricing, we focus on the optimality of dual variables. The ISO's commitment and dispatch decisions are still determined by the UCED problem.

\section{Extensions}
\label{sec:extensions}

\subsection{Transmission and Other Linear System-Wide Constraints} 

To determine locational convex hull prices, we consider a linear approximation to the transmission constraints and augment \textbf{CHP-Primal} with angle-eliminated transmission constraints in terms of the shift factors. The locational prices can be derived as a function of the dual variables associated with the supply-demand balance constraints and the transmission constraints, as in locational marginal pricing \cite[Chapter 8.11]{wood2012power}. Other linear system-wide constraints (e.g., contingency constraints, constraints that approximate loss in the transmission system) can be treated in a similar fashion. Theorem \ref{thm:primal_formulation} does not apply to nonlinear system-wide constraints \cite{Lemarechal2001}.

Note that in the presence of system-wide constraints that do not necessarily hold as equalities at a welfare-maximizing solution, such as the transmission constraints, the gap between the UCED problem and its dual includes not only the total lost opportunity cost of the units, but also another type of uplift that addresses the ISO's revenue insufficiency \cite{Cadwalader2010, Schiro2015}. 

\subsection{Ancillary Services} 
In markets where energy and ancillary services are co-optimized, a set of variables are introduced to represent the ancillary services provided by market participants. We use spinning reserve as an example. 

Let vector $\bm{r}_g$ denote the amount of spinning reserve provided by generator $g$ in each time period. Let $\underline{r}_{g}$ and $\overline{r}_{g}$ be the lower and upper limits on upward spinning reserve. We include the following constraints: 
\begin{equation}
	x_{gt} \underline{p}_{g} \leq {p}_{gt}, \quad \forall  t \in [1,T],\label{eq:reserve_1}
\end{equation}
\begin{equation}
	{p}_{gt} + {r}_{gt}  \leq 	x_{gt} \overline{p}_{g}, \quad \forall  t \in [1,T],\label{eq:reserve_11}
	\end{equation}
\begin{equation}
	x_{gt} \underline{r}_{g} \leq {r}_{gt} \leq 	x_{gt} \overline{r}_{g}, \quad \forall  t \in [1,T]  \label{eq:reserve_2}.
\end{equation}
The feasible set for each unit is redefined to be 
\begin{IEEEeqnarray*}{C}
		\mathcal{X}_g =  \lbrace \bm{p}_g \in \mathbb{R}^{T}, \bm{r}_g \in \mathbb{R}^{T},  \bm{x}_g \in \lbrace 0,1 \rbrace^{T}, \bm{u}_g \in \lbrace 0,1 \rbrace^{T-1} \,|\,  
		\eqref{eq:state_transition} \text{--}\eqref{eq:IBM_2},  \eqref{eq:reserve_1}\text{--}\eqref{eq:reserve_2} \rbrace.
\end{IEEEeqnarray*}

We can show that 
\begin{IEEEeqnarray*}{C}
	\text{conv}(\mathcal{X}_g ) =  \lbrace \bm{p}_g \in \mathbb{R}^{T}, \bm{r}_g \in \mathbb{R}^{T}, \bm{x}_g \in \mathbb{R}^{T}, \bm{u}_g \in \mathbb{R}^{T-1} \, | \,  
	  \eqref{eq:state_transition} \text{--}\eqref{eq:IBM_2}, \eqref{eq:IBM_trivial}, \eqref{eq:reserve_1}\text{--}\eqref{eq:reserve_2}\rbrace. 
\end{IEEEeqnarray*}
The proof is similar to that for Theorem \ref{thm:explicit_hull}. 

Since the convex envelope of a convex function taken over a convex domain is the convex function itself, and since the additional constraints \eqref{eq:reserve_1} and \eqref{eq:reserve_2} define a convex feasible set, introducing ancillary services does not alter the convex envelope. Since we explicitly characterize both the convex hulls and convex envelopes, we can obtain exact convex hull prices using \textbf{CHP-Primal} augmented with \eqref{eq:reserve_1}, \eqref{eq:reserve_2}, and system-wide constraints for ancillary services.

\subsection{Ramping Constraints}
\label{sec:ramping}

 Let $\overline{v}_g$ denote unit $g$'s start-up/shut-down ramp rate limit, and let $v_g$ be unit $g$'s ramp-up/down rate when committed. Ramping constraints are a set of private constraints that limit the increase or decrease of power output from one time period to the next \cite{Pan2015}:
\begin{equation}
	p_{gt} - p_{g, t-1} \leq v_g x_{g, t-1} + \overline{v}_g (1 - x_{g, t-1}), \, \forall  t \in [2,T], \label{eq:ramping_1} 
\end{equation}
\begin{equation}
	p_{g, t-1} - p_{gt}  \leq v_g x_{gt} + \overline{v}_g (1 - x_{gt}), \, \forall  t \in [2,T]. \label{eq:ramping_2}
\end{equation}
We redefine the feasible set for each unit to be 
\begin{IEEEeqnarray}{C}
		\mathcal{X}_g =  \lbrace \bm{p}_g \in \mathbb{R}^{T}, \bm{x}_g \in \lbrace 0,1 \rbrace^{T}, \bm{u}_g \in \lbrace 0,1 \rbrace^{T-1} \,|\,  
		\eqref{eq:state_transition} \text{--}\eqref{eq:IBM_2}, \eqref{eq:generation_capacity}, \eqref{eq:ramping_1}, \eqref{eq:ramping_2}\rbrace.
\end{IEEEeqnarray}

Ramping constraints define a convex feasible set, and thus do not change the convex envelope of the cost function. They complicate the convex hulls, however. When these time-coupled constraints are included in the definition of $\mathcal{X}_g$, equations \eqref{eq:state_transition}\text{--}\eqref{eq:IBM_2}, \eqref{eq:IBM_trivial}, \eqref{eq:generation_capacity}, together with ramping constraints \eqref{eq:ramping_1} and \eqref{eq:ramping_2} themselves, do not completely characterize $\text{conv}(\mathcal{X}_g )$. Additional valid inequalities are needed to describe the convex hulls. The number of valid inequalities needed is in general exponential in $T$ \cite{Damc-Kurt2015}. 

Explicit descriptions of $\text{conv}(\mathcal{X}_g)$ for the case where $T = 2$ and $T = 3$ are shown in \cite{Pan2015}. More importantly, valid inequalities in these descriptions can be applied to any two or three consecutive time periods to tighten the approximation of $\text{conv}(\mathcal{X}_g)$ for $T>3$. 

When considering ramping constraints, we solve an approximation of \textbf{CHP-Primal} that includes the above-mentioned valid constraints for $T = 2$ and $T=3$. Our description of $\text{conv}(\mathcal{X}_g)$ is not exact. Consequently, our approximation provides a lower bound for \textbf{CHP-Primal}. The gap between the approximated \textbf{CHP-Primal} and the UCED problem gives an upper bound for the duality gap between the UCED problem and its Lagrangian dual problem.

A UCED problem only represents averaged ramping over the length of a time period. When the time resolution is one hour, sub-hourly ramping in opposite directions may cancel out. Therefore, we believe that our approximation is close, especially for a day-ahead market in which ramping constraints are less likely to be binding compared to a real-time market. Note also that a time-decoupled pricing problem is used in MISO's single-hour approximation of convex hull pricing \cite{Wang2016}. This pricing problem includes ramping constraints, but does not capture the time-coupling implications of ramping.


\subsection{Minimization of Uplift Payments to a Subset of the Participating Units}

\label{sec:pCHP}

In certain electricity markets, only a subset of the participating units $\mathcal{G}$ can receive uplift payments. For example, it may be the case that only units dispatched to a strictly positive generation level are qualified for uplift payments; that is, units are not paid for merely participating in the market. In this case, the duality gap that convex hull pricing minimizes includes terms that might not end up being paid.

Let the set of units that are qualified to receive uplift payments be $\mathcal{G}' \subseteq \mathcal{G}$. If the qualifications can be determined prior to computing the prices, we can solve \textbf{CHP-Primal} with $\mathcal{G}$ replaced by $\mathcal{G}'$. The duality gap that the resulting prices minimize includes only uplift payments to units in $\mathcal{G}'$.

\subsection{Committing to Prices}
In convex hull pricing, prices that are coupled across multiple time periods as a whole minimize uplift payments over the specified horizon $T$ of the underlying UCED problem. A subset of these prices does not necessarily minimize uplift payments over any shorter time horizon that is a subset of $T$. In a day-ahead market, the whole set of 24 hourly prices is calculated and posted at once, so that the coupling would not be problematic insofar as the ISO commits to buying and selling at these prices. 

In contrast, look-ahead real-time markets are operated on a rolling basis where only the commitment, dispatch, and price calculated for the upcoming interval are implemented. Therefore, the coupling inherent across a \emph{single} look-ahead dispatch may not be represented appropriately by the sequence of convex hull prices, each of which corresponds to the upcoming interval in each successive look-ahead dispatch.

To make prices consistent across successive look-ahead dispatches, \cite{Hogan2016a} suggests that the pricing model represent past intervals in the convex hull pricing model, keep the commitment and dispatch decisions in the past as variables, but constrain prices in the past intervals to be equal to the realized prices. A simple way to achieve this in \textbf{CHP-Primal} is to add in each past interval a fictitious power source/sink to the system with infinite generation capacity, infinite consumption capacity, and a constant marginal cost/willingness to pay equal to the realized price. This power source/sink constrains past prices to be equal to the realized prices, and maintains the coupling between realized and upcoming prices to minimize uplift payments given the realized prices. In the transmission-constrained case, we can add in each past interval a fictitious power source/sink to the slack bus with a constant marginal cost equal to the realized price at the slack bus, and dualize transmission constraints that correspond to congested lines with a penalty equal to their realized optimal dual variable.

\section{Numerical Results}
\label{sec:results}

We implement \textbf{CHP-Primal} on a personal computer with a 2.2-GHz quad-core CPU and 16 GB of RAM. The optimization problems are modeled in CVX \cite{cvx} and solved with GUROBI 6.5 \cite{gurobi}. We consider four examples from the literature. The time resolution in all examples is one hour.

\subsection{Example 1}
We consider an example from \cite{Schiro2015} in which two units (including a block-loaded one) serve $35$ MW of load in a single period. We modify the original example by including a start-up cost for each unit. Table \ref{table:ex1_offer} specifies each unit's offers. Both units are assumed to be off initially. The optimal (and the only feasible) solution to the ISO's UCED problem is for unit 1 to generate $35$ MW and for unit 2 to stay offline.

Unit 1 is marginal\footnote{A marginal unit has an optimal dispatch level strictly between its maximum and minimum power output.} and sets the LMP. Seeing the LMP, unit 2's profit-maximizing decision is to go online and generate $50$ MW, which would result in a profit of \$$1900$. Therefore, based on the definition of $U_g$ in \eqref{eq:individual_uplift}, unit 2 has a lost opportunity cost of \$$1900$. The start-up cost of unit 1 is not covered by LMP. An uplift payment of \$$100$ is needed to make unit 1 whole (guarantee a non-negative profit). 

\textbf{CHP-Primal} for this example is an LP and gives the exact convex hull price (CHP). The resulting uplift payments are lower than those under LMP but still quite large (Table \ref{table:ex1_uplift}).


\renewcommand\arraystretch{1.3}
\begin{table}[!t]
\caption{Supply Offers in Example 1}
\label{table:ex1_offer}
\centering
\begin{tabular}{c c c c c c}
\firsthline
\multirow{2}{*}{\, Unit \,} & Start-up & No-load & Energy & $\underline{p}_g$ &$\overline{p}_g $\\
& \$ & \$ & \$/MWh & MW & MW \\
\hline
1 & 100 & 0 & 50 & 10 & 50\\
2 & 100 & 0 & 10 & 50 & 50 \\
\hline
\end{tabular}
\end{table}

\begin{table}[!t]
\caption{Comparison of Different Pricing Schemes for Example 1}
\label{table:ex1_uplift}
\centering
\begin{tabular}{c c c c c c}
\firsthline
\multirow{2}{*}{Pricing Scheme} & $\pi$ & $U_1$  & $U_2$ \\
 & \$/MWh & \$ &\$ \\
\hline
LMP & $50$ & $100$ & $1900$\\
CHP & $12$ & $1430$ & $0$\\
CHPq & $52$ & $30$ & - \\
\hline
\end{tabular}
\end{table}

Suppose that only units dispatched to a strictly positive generation level are qualified for uplift payments. In this situation, unit 2 does not receive any compensation for its lost opportunity cost. As suggested in Section \ref{sec:pCHP}, to minimize the uplift payment to the qualified units, we can instead solve \textbf{CHP-Primal} with unit 2 excluded. We refer to this pricing scheme as CHPq (CHP for qualified units). Table \ref{table:ex1_uplift} shows that, if only unit 1 is qualified for compensation, CHPq results in a lower uplift payment (\$$30$) than CHP (\$$1430$).

\subsection{Example 2}
We investigate a three-period two-unit example from \cite{MISO2010} with ramping constraints but without startup costs. Table \ref{table:ex2_offer} shows the supply offers and ramp rate limits. All units are assumed to be off initially. Table \ref{table:ex2_dispatch} presents the optimal commitment and dispatch decisions as well as the demand in each period. Ramping constraints require unit 2 to commit at $t = 2$ so that it can ramp up to the generation level needed at $t = 3$. Table \ref{table:ex2_uplift} displays energy prices and uplift payments under different pricing schemes. 

\begin{table}[!t]
\caption{Units in Example 2}
\label{table:ex2_offer}
\centering
\begin{tabular}{c c c c c c}
\firsthline
\multirow{2}{*}{\, Unit \,} & No-load & Energy & $\underline{p}_g$ &$\overline{p}_g $ & Ramp Rate\\
&  \$ & \$/MWh & MW & MW & MW/hr \\
\hline
1 & 0 & 60 & 0 & 100 & 120\\
2 & 600 & 56 & 0 & 100 & 60\\
\hline
\end{tabular}
\end{table}

Since unit 1 is the marginal unit in all three periods, the LMPs are set by unit 1 at \$$60$/MWh.  The payment based on LMPs covers all of unit 1 costs. An uplift payment of \$$560$ is needed to ``make unit 2 whole''.

We can approximate $\text{conv}(\mathcal{X}_g )$ with constraints \eqref{eq:state_transition}\text{--}\eqref{eq:generation_capacity}, \eqref{eq:IBM_trivial}, and ramping constraints. We refer to this pricing method as aCHP1 (approximate CHP). In aCHP2, we augment our formulation with valid inequalities describing $\text{conv}(\mathcal{X}_g)$ with $T=2$. Finally, using the description of $\text{conv}(\mathcal{X}_g )$ for $T=3$, we formulate the convex hulls in this three-period example, which results in the exact convex hull prices.

\begin{table}[!t]
\caption{Optimal Commitment and Dispatch for Example 2}
\label{table:ex2_dispatch}
\centering
\begin{tabular}{c c c c c c}
\firsthline
\multirow{2}{*}{\, $t$ \,} & $d_t$ & $x_{1,t}$ & $p_{1,t}$ & $x_{2,t}$ &$p_{2,t}$ \\
&  MW & & MW && MW \\
\hline
1 & 70 & 1 & 70 & 0& 0\\
2 & 100& 1 & 40 & 1& 60\\
3& 170 &1 & 70&1 & 100\\
\hline
\end{tabular}
\end{table}

\begin{table}[!t]
\caption{Comparison of Different Pricing Schemes for Example 2}
\label{table:ex2_uplift}
\centering
\begin{tabular}{c c c c c c c}
\firsthline
\multirow{2}{*}{Pricing Scheme} & $\pi_1$ & $\pi_2$ & $\pi_3$ & $U_1$  & $U_2$ \\
 & \$/MWh  & \$/MWh   & \$/MWh  & \$ &\$ \\
\hline
LMP & $60$ & $60$ & $60$ & $0$ & $560$\\
aCHP1 & $60$ & $60$ & $64$ & $120$ & $160$\\
aCHP2 & $60$ & $60$ & $65.6$ & $168$ & $0$ \\
CHP & $60$ & $60$ & $65.6$ & $168$ & $0$ \\
\hline
\end{tabular}
\end{table}

Table \ref{table:ex2_uplift} shows the energy prices and uplift payments under different pricing schemes. For this example, as the approximation of $\text{conv}(\mathcal{X}_g )$ becomes more accurate, the energy price at $t = 3$ increases. Roughly speaking, unit 1 has an increasing incentive to generate more than the ISO's optimal dispatch, increasing its lost opportunity cost, but unit 2's no-load costs can be better covered, decreasing its lost opportunity cost. The net effect is a decrease in total uplift as $\pi_3$ increases.

Note that the prices resulting from aCHP2 happen to equal the exact convex hull prices. This result implies that the valid inequalities for $T = 3$ are ``non-binding''. The approximation of $\text{conv}(\mathcal{X}_g)$ in aCHP2 is accurate enough to yield the exact convex hull prices for this example.

\subsection{Example 3}
We consider a 24-period 32-unit example from \cite{Wang2013a}. The cost functions for the units are linear. There are no ramping or transmission constraints. \textbf{CHP-Primal} is an LP through which the exact convex hull prices can be obtained. \textbf{CHP-Primal} solves in $0.02$ seconds, resulting in the same convex hull prices as reported in \cite{Wang2013a}. The duality gap between the UCED problem and its Lagrangian dual problem is \$$ 1\,148$, which equals the total lost opportunity cost. 

We also implement a standard sub-gradient method to solve the Lagrangian dual problem in the dual space. We adopt the step length update rule (c) shown in \cite[Theorem 10.4]{wolsey1998integer}. We implement the dual updates in MATLAB, and solve the inner-level integer programs with GUROBI. Since the standard sub-gradient method does not have a non-heuristic stopping criterion, we terminate the algorithm after 550 iterations when the objective function shows no improvement. The resulting objective function value is $0.88\%$ sub-optimal with respect to the exact dual maximum we obtained from \textbf{CHP-Primal}, and the total computational time is $37.3$s.
 
 \subsection{Example 4}
 
We consider a 96-period 76-unit 8-bus example that is based on structural attributes and data from ISO New England \cite{Krishnamurthy2016}. We consider the start-up costs, no-load costs, minimum up/down time constraints, and ramping constraints for the generation units. The cost functions are quadratic. We use Scenario 1 of the 90 load scenarios provided in \cite{Krishnamurthy2016}. Minimum generation levels for the units are not specified in the original data, so we let $\underline{p} = 0.8 \overline{p}$ for each nuclear plant and $\underline{p} = 0.6 \overline{p}$ for each coal-fired unit. The units' initial statuses are not provided. We solve a single-period UCED problem to obtain the optimal commitment and dispatch decisions for period 1. We use these optimal decisions as the units' initial statuses, and assume that the units have been on/off for sufficiently long time so that the minimum up/down time constraints are not initially binding. The flow limits of the 12 transmission lines in the system are not defined in the original data. Therefore, we first investigate the case without transmission constraints (Case 1). We then set a limit of $2100$ MW on the flow over each transmission line (Case 2).

For each case, we first solve the UCED problem and obtain the LMPs. We then determine convex hull prices using two methods: an approximation of \textbf{CHP-Primal} and the single-period approximation proposed in \cite{Wang2016}. Because of the ramping constraints, we approximate $\text{conv}(\mathcal{X}_g)$ using constraints \eqref{eq:state_transition}\text{--}\eqref{eq:generation_capacity}, \eqref{eq:IBM_trivial}, along with valid inequalities that completely characterize these convex hulls for $T = 2$ and $T=3$. We use the convex envelope described in Theorem \ref{thm:convex_envelope_quadratic} and solve the primal formulation as an SOCP. When solving the UCED problems, we include above-mentioned valid inequalities a priori, and set MIPgap to $0.01\%$.

Table \ref{table:ex4_UCED} shows the results for the UCED problem and the approximated \textbf{CHP-Primal}, as well as the relative gap between these two problems. The approximated \textbf{CHP-Primal} solves in polynomial time with respect to the number of constraints. However, if we were to solve the Lagrangian dual problem in the dual space for Case 2, there would be 2400 dual variables. Such a large number of dual variables due to transmission constraints creates difficulties for non-smooth optimization methods\cite{Wang2013b}.

The relative gap between approximated \textbf{CHP-Primal} and the UCED problem (called CHP gap hereafter) is $0.06\%$ for Case 1 and $0.10\%$ for Case 2. This small CHP gap bounds two other gaps from above. First, the duality gap between the UCED problem and its Lagrangian dual problem can only be smaller than the CHP gap. This verifies the theoretical result shown in \cite{Bertsekas1982}, which states that the relative duality gap of the UCED problem and its Lagrangian dual approaches zero as the number of heterogeneous generators approaches infinity.  Second, the approximation error (the gap between the conceptual \textbf{CHP-Primal} and our approximation) is bounded from above by the CHP gap.

Table \ref{table:ex4_uplift} compares the total uplift payment under the three pricing schemes. We only consider units' lost opportunity costs. In the single-period approximation, start-up and no-load costs are considered only for fast-start units. We classify a unit with a minimum up/down time of one hour as a fast-start unit, and 18 units fall into this category. We allocate start-up costs to peak usage hours. In both cases, each single-period approximation solves in much less than a second. The convex hull prices derived from the proposed method result in the least uplift payment in both cases.

\begin{table}[!t]
\caption{UCED and approximated CHP-Primal for Example 4}
\label{table:ex4_UCED}
\centering
\begin{tabular}{l  r r}
\firsthline
& Case 1 & Case 2 \\
\hline
UCED Obj.(\$) & $41\,972\,688$ & $42\,303\,772$ \\
UCED CPU Time (s)& $87.42$ &$112.02$\\
\textbf{CHP-Primal} Obj. (\$) &$41\,946\,298$& $42\,259\,962$   \\
\textbf{CHP-Primal} CPU Time (s) & $6.22$ & $13.04$  \\
Gap (\%) &$0.063$ & $0.104$\\
\hline
\end{tabular}
\end{table}

\begin{table}[!t]
\caption{Total Lost Opportunity Cost (\$) for Example 4}
\label{table:ex4_uplift}
\centering
\begin{tabular}{r M{0.8cm} M{2.3cm} M{2.7cm}}
\firsthline
& LMP & Primal Formulation for CHP & Single-Period Approximation of CHP \\
\hline
Case 1& $183\,473$ & $33\,965$  & $96\,938$\\
Case 2& $329\,032$ &$40\,863$ & $177\,391$\\
\hline
\end{tabular}
\end{table}

 \section{Conclusions}
 \label{sec:conclusions}
 
This paper has proposed a polynomially-solvable primal formulation for the Lagrangian dual problem of the unit commitment and economic dispatch (UCED) problem. This primal formulation explicitly describes the convex hull of each unit's feasible set and the convex envelope of each unit's cost function. We show that exact convex hull prices can be obtained in the absence of ramping constraints, and that exactness is preserved when we consider ancillary services or any linear system-wide constraints. A tractable approximation applies when ramping constraints are considered. 
 
We cast our formulation as a second-order cone program if the cost functions are quadratic, and as a linear program if the cost functions are affine or piecewise linear. Convex hull prices are thereby determined in a robust and scalable manner. A 96-period 76-unit transmission-constrained example solves in less than fifteen seconds on a personal computer. This example shows that prices obtained through our formulation further reduce uplift payments compared to a single-period approximation of convex hull pricing.
 
The results of our paper have important applications beyond pricing. The convex envelopes in our paper can be used to tighten the formulation of the UCED problem (e.g., the ones proposed in \cite{Ostrowski2012} and \cite{Morales-Espana2013}). Our primal formulation is also a tight convex relaxation of the UCED problem. Because of computational complexity, many planning models for the generation and/or transmission system currently use only an embedded economic dispatch model for system operation, rather than a more realistic UCED model. The proposed primal formulation should provide a better and tractable approximation for system operation.

\section{Acknowledgement}
The authors would like to thank Dr.\ Paul Gribik, Dr.\ Eugene Litvinov, Dr.\ Dane A. Schiro, and Dr.\ R.\ Kevin Wood for discussions and valuable suggestions.

\newpage
\section*{Appendix: A Proof of Lemma 2.9 in \cite{rajan2005minimum}}

Over a time horizon $T \in \mathbb{Z}_{++}$, consider a generating unit with minimum up time of $L \in \mathbb{Z}_{+}$ dispatch intervals and minimum down time of $l \in \mathbb{Z}_{+}$ dispatch intervals. Let $\bm{x}$ (denoted by $\bm{u}$ in \cite{rajan2005minimum}) be the commitment vector and $\bm{u}$ (denoted by $\bm{v}$ in \cite{rajan2005minimum}) be the start-up vector.

The commitment polytope of a unit $D_T(L,l)$ is defined to be 
\begin{IEEEeqnarray}{rll}
D_T(L,l) = \lbrace \bm{x} \in \mathbb{R}^T, \bm{u} \in \mathbb{R}^{T - 1} \, | \, & \sum_{i = t - L + 1}^t u_i \leq x_t,  & \forall t \in [L + 1, T], \\
& \sum_{i = t - l + 1}^t u_i \leq 1 - x_{t - l},  \quad & \forall t \in [l + 1, T], \\
& u_t \geq x_t - x_{t - 1}, & \forall t \in [2, T], \\
& u_t \geq 0, & \forall t \in [2, T] \rbrace.
\end{IEEEeqnarray}

To simplify our proof, let $\bm{w}$ be the shut-down vector, so that 
\begin{equation}
w_t = u_t + x_{t - 1} - x_t, \forall t \in [2,T]. \label{eq:shutdown}
\end{equation}

Lemma 2.9 in  \cite{rajan2005minimum}:
\begin{lemma}
\leavevmode
Let $(\overline{\bm{x}}, \overline{\bm{u}}) \in D_T(L,l)$. Then there exist integral points $a^s \in D_T(L,l), s\in \mathcal{S}$, and $\lambda_s \in \mathbb{R}_+, s \in \mathcal{S}$ such that
\renewcommand{\theenumi}{(\roman{enumi})}%
\begin{enumerate}
\item $\overline{\bm{x}} = \sum_{s \in \mathcal{S}} \lambda_s \bm{x}(a^s)$,  $\overline{\bm{u}} = \sum_{s \in \mathcal{S}} \lambda_s \bm{u}(a^s)$, and $ \sum_{s \in \mathcal{S}} \lambda_s = 1$;
\item let $\mathcal{S}_t^d$ be the set of all points that have been shut down at $t$, then $\forall t \in [2,T]$, $\overline{w}_t = \sum_{s \in \mathcal{S}_t^d} \lambda_s$;
\item let $\mathcal{S}_t^u$ be the set of all points that have been started up at $t$, then $\forall t \in [2,T]$, $\overline{u}_t = \sum_{s \in \mathcal{S}_t^u} \lambda_s$,
\end{enumerate}
where $\bm{u}(a^s)$ is the $\bm{u}$ vector corresponding to $a^s$.
\end{lemma}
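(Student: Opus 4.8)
The plan is to prove the three claims about the decomposition simultaneously by induction on the time horizon $T$, peeling off the last time period. Given $(\overline{\bm{x}}, \overline{\bm{u}}) \in D_T(L,l)$, I would first extend the data by the shut-down variables via \eqref{eq:shutdown}, and observe that the polytope constraints force $\overline{x}_t, \overline{u}_t, \overline{w}_t \in [0,1]$ for all $t$ (the minimum up/down inequalities together with $u_t \geq \max\{0, x_t - x_{t-1}\}$ give these bounds). The base case $T = 1$ is trivial since $D_1$ is just the segment $x_1 \in [0,1]$. For the inductive step, I would restrict $(\overline{\bm{x}}, \overline{\bm{u}})$ to its first $T-1$ coordinates; a short check shows this restriction lies in $D_{T-1}(L,l)$, so the induction hypothesis yields integral points $a^s \in D_{T-1}(L,l)$ and weights $\lambda_s$ satisfying (i)--(iii) for the horizon $T-1$.

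The heart of the argument is to extend each length-$(T-1)$ integral point $a^s$ to a length-$T$ integral point in a way that (a) keeps it feasible in $D_T(L,l)$, and (b) makes the convex combination reproduce $\overline{x}_T$ and $\overline{u}_T$. Each $a^s$ ends in one of two states at period $T-1$: online ($x_{T-1}(a^s) = 1$) or offline ($x_{T-1}(a^s) = 0$). For an offline point, I may set $x_T(a^s) \in \{0,1\}$ — but setting it to $1$ (a start-up) is only allowed if that point has been down for at least $l$ periods, i.e.\ it lies in an ``eligible to start'' subclass. Symmetrically, for an online point, shutting down at $T$ is only allowed if it has been up for at least $L$ periods. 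So I would partition $\mathcal{S}$ into four groups by (current state, eligibility to switch) and, within each switchable group, split the weight $\lambda_s$ to send a carefully chosen fraction of the mass to the ``switch'' extension and the rest to the ``stay'' extension. The fractions are chosen so that the total new-online mass equals $\overline{x}_T$ and the total start-up mass equals $\overline{u}_T$; that such fractions exist is exactly where the remaining polytope inequalities at $t = T$ (namely $\sum_{i=T-L+1}^T u_i \le x_T$, $\sum_{i=T-l+1}^T u_i \le 1 - x_{T-l}$, $u_T \ge x_T - x_{T-1}$, $u_T \ge 0$) must be invoked, translated through (ii)--(iii) of the induction hypothesis into statements about how much mass sits in each switchable subclass.

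The main obstacle — and the place where the original proof in \cite{rajan2005minimum} is flawed — is showing that \emph{enough} mass is available in the correct subclasses to realize the required fractions; that is, verifying the inequalities of the form ``mass of points currently offline and eligible to start $\geq$ (amount of start-up mass we need to create at $T$)'' and the analogous one for shut-downs. This is where properties (ii) and (iii) for horizon $T-1$ do the real work: the amount of mass of points that started up at some period $t' \in [T-l, T-1]$ is $\sum_{t'} \overline{u}_{t'}$, and the min-down-time inequality $\sum_{i = T - l + 1}^{T} u_i \le 1 - x_{T-l}$ bounds this in a way compatible with the needed allocation; a symmetric computation handles the min-up-time side using the shut-down bookkeeping. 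I would carry out these mass-counting inequalities explicitly, checking each of the four cases, and then confirm that the extended points remain integral feasible points of $D_T(L,l)$ and that (i)--(iii) now hold at horizon $T$ by construction. I expect the bookkeeping in these case splits — especially tracking the ``time since last switch'' for each class and matching it against the $L$- and $l$-window inequalities — to be the most delicate and error-prone part, and the one requiring the most care to get right.
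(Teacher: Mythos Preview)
Your plan is essentially the same as the paper's: induction on $T$, restrict to the first $T-1$ coordinates, apply the hypothesis, partition the resulting integral points into the four (state, eligibility-to-switch) classes, split weights inside the switchable classes, and use properties (ii)--(iii) together with the turn-on/turn-off inequalities at $t=T$ to show the switchable classes carry enough mass (the paper's key inequalities are precisely $\sum_{s \in \mathcal{S}'_{11}} \mu^s \ge \overline w_T$ and $\sum_{s \in \mathcal{S}'_{21}} \mu^s \ge \overline u_T$). Two small points where you differ: (a) your base case $T=1$ is not quite right---the defining inequalities of $D_T(L,l)$ are all vacuous for $T=1$, so $D_1$ is $\mathbb{R}$, not $[0,1]$; the paper starts at $T=2$; (b) the paper applies the hypothesis to $D_{T-1}(L-1,l-1)$ rather than your $D_{T-1}(L,l)$, a technicality forced by the way it restates the lemma with the side condition $L,l \le T-1$ (so that $L-1,l-1 \le T-2$ fits the hypothesis at horizon $T-1$). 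Your choice of $D_{T-1}(L,l)$ is in fact cleaner, since the $t\le T-1$ constraints of $D_T(L,l)$ are then automatically inherited by the extended points and only the $t=T$ constraints need checking.
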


This lemma states that any point in $D_T(L,l)$ can be written as a convex combination of a set of integral points in this polytope. This implies that every extreme point of $D_T(L,l)$ is integral.

The proof of this lemma provided in \cite{rajan2005minimum} has the following flaws:
\begin{itemize}
\item the proof by induction uses a base case of $D_2(1,1)$ and an inductive step that shows the statements are true for $D_{T}(L,l)$ if they are true for $D_{T-1}(L,l)$. Multiple natural numbers are varying, and induction must be applied to $T$, $L$ and $l$;
\item in the inductive step, the authors consider  ``$\overline{u}_t$ of the integral points'', which is not well-defined, since a fraction of a natural number can be fractional;
\item the determination of $\lambda_s$ is not specified.
\end{itemize}

We state the lemma in a slightly different way and give a proof of the new lemma.

\begin{lemma}
\leavevmode
For all $T \in \mathbb{Z}_{++}\backslash\lbrace 1\rbrace$, $\forall L,l \in \mathbb{Z}_{+}$ such that $L \leq T-1$ and $l \leq T-1$, $\forall (\overline{\bm{x}}, \overline{\bm{u}}) \in D_T(L,l)$, there exists a set of integral points $\lbrace (\hat{\bm{x}}^s, \hat{\bm{u}}^s), s \in \mathcal{S} = \lbrace 1, 2, \ldots, |\mathcal{S}|\rbrace \rbrace \subset D_T(L,l) $, and $\lbrace \lambda^s \in \mathbb{R}_+, s \in \mathcal{S} \rbrace$ such that
\renewcommand{\theenumi}{(\roman{enumi})}%
\begin{enumerate}
\item $\overline{\bm{x}} = \sum_{s \in \mathcal{S}} \lambda^s \hat{\bm{x}}^s$,  $\overline{\bm{u}} = \sum_{s \in \mathcal{S}} \lambda^s \hat{\bm{u}}^s$, and $ \sum_{s \in \mathcal{S}} \lambda^s = 1$;
\item for all $t \in [2,T]$, $\overline{w}_t = \sum_{s \in \mathcal{S}_{t,d}} \lambda^s$, where $\mathcal{S}_{t,d} = \lbrace s \,|\,  \hat{w}^s_t = 1 \rbrace$;
\item for all $t \in [2,T]$, $\overline{u}_t = \sum_{s \in \mathcal{S}_{t,u}} \lambda^s$, where $\mathcal{S}_{t,u} = \lbrace s \,|\,  \hat{u}^s_t = 1 \rbrace$.
\end{enumerate}
\end{lemma}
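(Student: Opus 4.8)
The plan is to prove the restated lemma by strong induction on the horizon $T$, peeling off the last period. It helps to carry the shut-down variables $\overline w_t = \overline u_t + \overline x_{t-1} - \overline x_t$ alongside: the state-transition inequality gives $\overline w_t \ge 0$, and telescoping shows that the minimum-down-time inequalities are equivalent to $\sum_{i=t-l+1}^t \overline w_i \le 1 - \overline x_t$. For the base case $T=2$ one has $L, l \in \{0,1\}$, so $D_2(L,l)$ lives in three variables and has at most four integral points; given $(\overline x_1, \overline x_2, \overline u_2)$, I would write the weights $\lambda^s$ explicitly in terms of $\overline x_1, \overline x_2, \overline u_2, \overline w_2$ and check $\lambda^s \ge 0$, $\sum_s \lambda^s = 1$, and (ii)--(iii) directly from the four defining inequalities.

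For the inductive step, fix $(\overline{\bm x}, \overline{\bm u}) \in D_T(L,l)$ and set $L' = \min\{L, T-2\}$, $l' = \min\{l, T-2\}$. First I would show the truncation $(\overline x_1,\dots,\overline x_{T-1},\overline u_2,\dots,\overline u_{T-1})$ lies in $D_{T-1}(L',l')$: when $L \le T-2$ this is immediate because the defining inequalities of $D_{T-1}(L,l)$ are among those of $D_T(L,l)$, and when $L = T-1$ one needs in addition $\sum_{i=2}^{T-1}\overline u_i \le \overline x_{T-1}$, which follows from $\sum_{i=2}^T \overline u_i \le \overline x_T$, $\overline u_T \ge 0$ and the state-transition inequality at $t=T$ (symmetrically for $l=T-1$). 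By the inductive hypothesis the truncation decomposes into integral points $(\hat{\bm x}^s,\hat{\bm u}^s) \in D_{T-1}(L',l')$ with weights $\lambda^s$ obeying (i)--(iii) over $[1,T-1]$. I would then sort these $(T-1)$-schedules into four groups by whether each ends committed or uncommitted at $T-1$ and whether its current on- or off-block is long enough --- length at least $L$, resp.\ at least $l$ --- to permit a shut-down, resp.\ a start-up, at period $T$. A ``short-block'' schedule is forced to keep its period-$(T-1)$ state at $T$; each ``long-block'' schedule has two admissible extensions. Let $a$, $c$ be the total weights of the long committed and long uncommitted groups. I would route a sub-weight $\overline w_T$ of the long committed group through a shut-down at $T$ and a sub-weight $\overline u_T$ of the long uncommitted group through a start-up at $T$, leaving the remaining weights in place; then the aggregate commitment at $T$ is $\overline x_{T-1} - \overline w_T + \overline u_T = \overline x_T$, the marginal conditions (ii), (iii) at $t=T$ hold by construction, and (i)--(iii) at earlier periods are inherited.

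The crux is that this routing is feasible, i.e.\ $0 \le \overline w_T \le a$ and $0 \le \overline u_T \le c$. The lower bounds are just the sign constraints on $\overline w_T$ and $\overline u_T$. For the upper bounds, the key observation is that the marginal identities (ii), (iii) at periods $T-L+1,\dots,T-1$ and $T-l+1,\dots,T-1$, guaranteed inductively, say precisely that the total weight of committed short-block schedules is $\sum_{i=T-L+1}^{T-1}\overline u_i$ and that of uncommitted short-block schedules is $\sum_{i=T-l+1}^{T-1}\overline w_i$; hence $a = \overline x_{T-1} - \sum_{i=T-L+1}^{T-1}\overline u_i$ and $c = 1 - \overline x_{T-1} - \sum_{i=T-l+1}^{T-1}\overline w_i$. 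Substituting $\overline w_T = \overline u_T + \overline x_{T-1} - \overline x_T$, the bound $\overline w_T \le a$ collapses to the minimum-up-time inequality $\sum_{i=T-L+1}^{T}\overline u_i \le \overline x_T$, and $\overline u_T \le c$ collapses to the $w$-form $\sum_{i=T-l+1}^{T}\overline w_i \le 1 - \overline x_T$ of the minimum-down-time inequality, both of which hold since $(\overline{\bm x},\overline{\bm u}) \in D_T(L,l)$. The main obstacle is therefore less a single hard step than getting the accounting exactly right: adopting the strengthened induction hypothesis that carries (ii)--(iii), enumerating correctly which one- and two-way extensions are admissible near the end of the horizon (including the initial-block special cases when $L = T-1$ or $l = T-1$, where only blocks reaching back to period $1$ can transition at $T$), and checking that (i)--(iii) indeed survive for the extended schedules.
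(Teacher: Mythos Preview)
Your proposal is correct and follows the same inductive skeleton as the paper's proof: peel off period $T$, decompose the truncation via the inductive hypothesis, partition the resulting integral schedules into four groups according to the commitment state at $T-1$ and whether the current on/off block is long enough to allow a transition at $T$, extend, and certify that the routing $\overline w_T \le a$, $\overline u_T \le c$ is feasible by invoking the minimum up/down inequalities at $t=T$.

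The one substantive difference is the polytope to which you truncate. You apply the inductive hypothesis in $D_{T-1}(L',l')$ with $L'=\min\{L,T-2\}$, $l'=\min\{l,T-2\}$, whereas the paper applies it in $D_{T-1}(L-1,l-1)$. Your choice is the safer one: when $L\le T-2$ your integral $(T{-}1)$-schedules already obey minimum up time $L$ on $[1,T-1]$, so every one-period extension you form automatically satisfies the $D_T(L,l)$ constraints at $t\in[L+1,T-1]$, and only the constraint at $t=T$ remains to be checked (which your four-group classification handles). With the paper's choice the inductive decomposition is only guaranteed to produce schedules obeying minimum up time $L-1$; such a decomposition may contain a schedule with an \emph{interior} on-block of length exactly $L-1$, which has \emph{no} feasible extension in $D_T(L,l)$. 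For instance, with $T=5$, $L=3$, $l=1$, the point $\tfrac12(1,1,1,0,0)+\tfrac12(0,1,1,1,0)\in D_5(3,1)$ has truncation $(\tfrac12,1,1,\tfrac12)$, and the $D_4(2,1)$-decomposition $\tfrac12(0,1,1,0)+\tfrac12(1,1,1,1)$ satisfies (i)--(iii); but $(0,1,1,0)$ violates $u_2+u_3+u_4\le x_4$ in $D_5(3,1)$ no matter how period $5$ is appended. Your truncation sidesteps this issue entirely, at the modest cost of checking the boundary case $L=T-1$ (resp.\ $l=T-1$) separately, which you correctly flag.
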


\begin{proof}
We prove by induction. 

\textbf{Base case:} consider $T = 2$. Since the time resolution we consider is one dispatch interval, a minimum up/down time of less than one dispatch interval has the same effect as a minimum up/down time of one dispatch interval. Therefore, $D_2(1, 1) = D_2(1, 0) = D_2(0, 1) = D_2(0, 0)$, so that we can only consider the case $D_2(1, 1)$, which is the case considered in the original proof of \cite{rajan2005minimum}.

\textbf{Induction hypothesis:} suppose the given statement holds for $T -1$, where $T > 2$ and $T \in \mathbb{Z}$. That is, suppose $\forall L',l' \in \mathbb{Z}_{+}$ such that $L' \leq T-2$ and $l' \leq T- 2$, $\forall (\bm{x}, \bm{u}) \in D_{T-1}(L',l')$, there exists a set of integral points $\lbrace (\tilde{\bm{x}}^s, \tilde{\bm{u}}^s), s \in \mathcal{S}' = \lbrace 1, 2, \ldots, |\mathcal{S}'|\rbrace \rbrace \subset D_{T-1}(L',l') $, and $\lbrace \mu^s \in \mathbb{R}_+, s \in \mathcal{S}' \rbrace$ such that (i), (ii), and (iii) hold. 

\textbf{We need to show:} the statement holds for $T$. That is, $\forall L,l \in \mathbb{Z}_{+}$ such that $L \leq T-1$ and $l \leq T-1$, $\forall (\overline{\bm{x}}, \overline{\bm{u}}) \in D_T(L,l)$, there exists a set of integral points $\lbrace (\hat{\bm{x}}^s, \hat{\bm{u}}^s), s \in \mathcal{S} = \lbrace 1, 2, \ldots, |\mathcal{S}|\rbrace \rbrace \subset D_T(L,l) $, and $\lbrace \lambda^s \in \mathbb{R}_+, s \in \mathcal{S} \rbrace$ such that (i), (ii), and (iii) hold.

Given any $L,l \in \mathbb{Z}_{++}$ such that $L \leq T-1$ and $l \leq T-1$, and given any $(\overline{\bm{x}}, \overline{\bm{u}}) \in D_T(L,l)$, we drop the last entry of $\overline{\bm{x}}$ and denote the truncated vector by  $\bm{x} \in \mathbb{R}^{T-1}$. Similarly, we drop the last entry of $\overline{\bm{u}}$ and denote the truncated vector as  $\bm{u} \in \mathbb{R}^{T-2}$. We then have $(\bm{x}, \bm{u}) \in  D_{T-1}(L-1,l-1)$.\footnote{For the boundary cases where either $L$ or $l$ equals one, or both, note that $D_{T}(L,0) = D_{T}(L,1)$, $D_{T}(0,l) = D_{T}(1,l)$, and $D_{T}(0,0) = D_{T}(1,1)$.} Since $L-1 \leq T-2$ and $l-1 \leq T-2$, by the induction hypothesis, we can find a set of integral points $\lbrace (\tilde{\bm{x}}^s, \tilde{\bm{u}}^s), s \in \mathcal{S}' \rbrace$ and $\lbrace \mu^s \in \mathbb{R}_+, s \in \mathcal{S}' \rbrace$ that satisfy  (i), (ii), and (iii). Similarly, we let $\mathcal{S}'_{t,d} = \lbrace s \,|\,  \tilde{w}^s_t = 1 \rbrace$ and $\mathcal{S}'_{t,u} = \lbrace s \,|\,  \tilde{u}^s_t = 1 \rbrace$.

We construct $\lbrace (\hat{\bm{x}}^s, \hat{\bm{u}}^s), s \in \mathcal{S} \rbrace\subset D_T(L,l)$ from $\lbrace (\tilde{\bm{x}}^s, \tilde{\bm{u}}^s), s \in \mathcal{S}' \rbrace \subset D_{T-1}(L-1,l-1)$ by defining all but the last components of $\hat{\bm{x}}^s$ and $\hat{\bm{u}}^{s}$ to be the same as $\tilde{\bm{x}}^s$ and $\tilde{\bm{u}}^s$, respectively, and appending a $T$-th component as needed. When necessary, we may construct more than one $ (\hat{\bm{x}}^s, \hat{\bm{u}}^s)$ based on a single $(\tilde{\bm{x}}^s, \tilde{\bm{u}}^s)$. We construct $(\hat{\bm{x}}^s, \hat{\bm{u}}^s)$ in a way that allows us to find a set of $\lambda^s$ satisfying (i), (ii), and (iii).

To facilitate our proof, we partition $\mathcal{S}'$ into $\mathcal{S}'_1$ and $\mathcal{S}'_2$, so that
\begin{equation}
\mathcal{S}'_1 = \lbrace s \in \mathcal{S}' \, | \, \tilde{x}^s_{T-1} = 1 \rbrace,
\end{equation}
and 
\begin{equation}
{S}'_2 = \lbrace s \in \mathcal{S}' \, | \, \tilde{x}^s_{T-1} = 0 \rbrace.
\end{equation}
That is, $\mathcal{S}'_1$ contains integral points that involve the unit being on at time $T-1$. 

We further partition $\mathcal{S}'_1$ into $\mathcal{S}'_{11}$ and $\mathcal{S}'_{12}$, so that 
\begin{equation}
\mathcal{S}'_{11} =  \lbrace s \in \mathcal{S}'_1 \, | \, \tilde{u}_{T-L+1}^s = \tilde{u}_{T-L+2}^s = \cdots  = \tilde{u}_{T-1}^s = 0 \rbrace.
\end{equation}
and $\mathcal{S}'_{12} = \mathcal{S}'_1 \backslash \mathcal{S}'_{11}$. That is, the integral points in $\mathcal{S}'_{11}$ involve the unit not starting up during $t \in [T-L+1, T-1]$. 

Similarly, we partition $\mathcal{S}'_2$ into $\mathcal{S}'_{21}$ and $\mathcal{S}'_{22}$, so that 
\begin{equation}
\mathcal{S}'_{21} =  \lbrace s \in \mathcal{S}'_1 \, | \, \tilde{w}_{T-l+1}^s = \tilde{w}_{T-l+2}^s = \cdots  = \tilde{w}_{T-1}^s = 0  \rbrace,
\end{equation} 
and $\mathcal{S}'_{22} = \mathcal{S}'_2\backslash \mathcal{S}'_{21}$. That is, the integral points in $\mathcal{S}'_{21}$ involve the unit not shutting down during $t \in [T-l+1, T-1]$. 

We have effectively partitioned $\mathcal{S}'$ into $\mathcal{S}'_{11}$, $\mathcal{S}'_{12}$, $\mathcal{S}'_{21}$, and $\mathcal{S}'_{22}$, which results in the following properties:
\begin{itemize}
\item $ \sum_{s \in \mathcal{S}'_1} \mu^s = x_{T-1}$, and $ \sum_{s \in \mathcal{S}'_2} \mu^s = 1 - x_{T-1}$.
\item $ \sum_{s \in \mathcal{S}'_{11}} \mu^s \geq \overline{w}_T $. 

To see this, notice that $ \sum_{s \in \mathcal{S}'_{11}} \mu^s = x_{T-1} - \sum_{t = T-L+1}^{T-1} \sum_{s \in \mathcal{S}'_{t,u}} \mu^s =  x_{T-1} - \sum_{t = T-L+1}^{T-1} u_t$, where the first equality follows the definition of $\mathcal{S}^{\prime}_{11}$ and $\mathcal{S}^{\prime}_{t, u}$, and the second equality follows from (iii). Now we invoke the turn on inequality at $T$ for $D_T(L,l)$: $\sum_{t = T - L + 1}^T \overline{u}_t = \sum_{t = T-L+1}^{T-1} u_t + \overline{u}_T \leq  \overline{x}_{T}$. Applying this turn on inequality to the previous equality yields $ \sum_{s \in \mathcal{S}'_{11}} \mu^s \geq -\overline{x}_T + \overline{x}_{T-1} + \overline{u}_T$. The desired result follows from \eqref{eq:shutdown}.

\item $ \sum_{s \in \mathcal{S}'_{21}} \mu^s \geq \overline{u}_T $. 

To see this, notice that $ \sum_{s \in \mathcal{S}'_{21}} \mu^s = 1 - x_{T-1} - \sum_{t = T-l+1}^{T-1} \sum_{s \in \mathcal{S}'_{t,d}} \mu^s =  1 - x_{T-1} - \sum_{t = T-l+1}^{T-1} w_t$, where the first equality is by definition of $\mathcal{S}^{\prime}_{21}$ and $\mathcal{S}^{\prime}_{t, d}$, and the second equality follows from (ii). Now we invoke the turn off inequality at $T$ for $D_T(L,l)$: $\sum_{t = T - l + 1}^T \overline{w}_t = \sum_{t = T-l+1}^{T-1} w_t + \overline{w}_T \leq 1 - \overline{x}_{T}$. Applying this turn off inequality to the previous equality yields $ \sum_{s \in \mathcal{S}'_{21}} \mu^s \geq \overline{x}_T - \overline{x}_{T-1} + \overline{w}_T$. The desired result follows from \eqref{eq:shutdown}.
\end{itemize} 

To satisfy (ii), we would like to append a one to some of the shut-down vectors $\tilde{\bm{w}}^s$ and assign positive values to their associated $\lambda^s$, so that $\overline{w}_T = \sum_{s \in \mathcal{S}_{T,d}}\lambda^s$. Because of the minimum up time constraints, we can only append a one to those $\tilde{\bm{w}}^s$ with $s \in \mathcal{S}' _{11}$.

To satisfy (iii), we would like to append a one to some of the start-up vectors $\tilde{\bm{u}}^s$ and assign positive values to their associated $\lambda^s$, so that $\overline{u}_T = \sum_{s \in \mathcal{S}_{T,u}} \lambda^s$.  Because of the minimum down time constraints, we can only append a one to those $\tilde{\bm{u}}^s$ with $s \in \mathcal{S}' _{21}$.

We construct $\lbrace (\hat{\bm{x}}^s, \hat{\bm{u}}^s), s \in \mathcal{S} \rbrace$ according to the following rules:
\begin{itemize}
\item Construct two integral points $ (\hat{\bm{x}}^{s1}, \hat{\bm{u}}^{s1}),  (\hat{\bm{x}}^{s2}, \hat{\bm{u}}^{s2})$ from each $\lbrace  (\tilde{\bm{x}}^s, \tilde{\bm{u}}^s), s \in \mathcal{S}'_{11} \rbrace$. Each shut-down vector $\hat{\bm{w}}^{s1}$ is created by appending a one to $\tilde{\bm{w}}^{s}$, and each $\hat{\bm{w}}^{s2}$ is created by appending a zero. Each start-up vector $\hat{\bm{u}}^{s1}$ and $\hat{\bm{u}}^{s2}$ can only be created by appending a zero to $\tilde{\bm{u}}^s$. Each commitment vector $\hat{\bm{x}}^{s1}$ is created by appending a zero to $\tilde{\bm{x}}^s$, and each $\hat{\bm{x}}^{s2}$ is created by appending a one. We set $\lambda^{s1} + \lambda^{s2} = \mu^s, \forall s \in \mathcal{S}'_{11}$. Because $ \sum_{s \in \mathcal{S}'_{11}} \mu^s \geq \overline{w}_T $, we can find $\lambda^{s1},  \lambda^{s2},  s \in \mathcal{S}'_{11}$ such that $\sum_{s \in \mathcal{S}'_{11}} \lambda^{s1} = \overline{w}_T$.

\item Construct only one integral point $(\hat{\bm{x}}^{s}, \hat{\bm{u}}^{s})$ from each $\lbrace  (\tilde{\bm{x}}^s, \tilde{\bm{u}}^s), s \in \mathcal{S}'_{12} \rbrace$. Because of the minimum up time constraints, we can only append a one to $\tilde{\bm{x}}^s$, a zero to $\tilde{\bm{u}}^s$, and a zero to $\tilde{\bm{w}}^s$. We set $\lambda^{s} = \mu^s, \forall s \in \mathcal{S}'_{12}$.

\item Construct two integral points $(\hat{\bm{x}}^{s1}, \hat{\bm{u}}^{s1}),  (\hat{\bm{x}}^{s2}, \hat{\bm{u}}^{s2})$ from each $\lbrace  (\tilde{\bm{x}}^s, \tilde{\bm{u}}^s), s \in \mathcal{S}'_{21} \rbrace$. We create each start-up vector $\hat{\bm{u}}^{s1}$ by appending a one to $\tilde{\bm{u}}^s$, and each $\hat{\bm{u}}^{s2}$ by appending a zero. Each shut-down vector $\hat{\bm{w}}^{s1},$ and $\hat{\bm{w}}^{s2}$ can only be created by appending a zero to $\tilde{\bm{w}}^{s}$. Each commitment vector $\hat{\bm{x}}^{s1}$ is created by appending a one to $\tilde{\bm{x}}^s$, and $\hat{\bm{x}}^{s2}$ is created by appending a zero. We set $\lambda^{s1} + \lambda^{s2} = \mu^s, \forall s \in \mathcal{S}'_{21}$. Because $ \sum_{s \in \mathcal{S}'_{21}} \mu^s \geq \overline{u}_T $, we can find $\lambda^{s1},  \lambda^{s2},  s \in \mathcal{S}'_{21}$ such that $ \sum_{s \in \mathcal{S}'_{21}} \lambda^{s1} = \overline{u}_T$.

\item Construct only one integral point $(\hat{\bm{x}}^{s}, \hat{\bm{u}}^{s})$ from each $\lbrace  (\tilde{\bm{x}}^s, \tilde{\bm{u}}^s), s \in \mathcal{S}'_{22} \rbrace$. Because of the minimum down time constraints, we can only append a zero to $\tilde{\bm{x}}^s$, a zero to $\tilde{\bm{u}}^s$, and a zero to $\tilde{\bm{w}}^s$. We set $\lambda^{s} = \mu^s, \forall s \in \mathcal{S}'_{22}$.
\end{itemize}

To summarize, we have constructed $\lbrace(\hat{\bm{x}}^s, \hat{\bm{u}}^s), s \in \mathcal{S} \rbrace = \lbrace (\hat{\bm{x}}^{s1}, \hat{\bm{u}}^{s1}), s \in \mathcal{S}'_{11} \rbrace \cup  (\hat{\bm{x}}^{s2}, \hat{\bm{u}}^{s2}), s \in \mathcal{S}'_{11} \rbrace \cup  \lbrace (\hat{\bm{x}}^s, \hat{\bm{u}}^s), s \in \mathcal{S}'_{12} \rbrace \cup  \lbrace (\hat{\bm{x}}^{s1}, \hat{\bm{u}}^{s1}), s \in \mathcal{S}'_{21} \rbrace \cup  \lbrace (\hat{\bm{x}}^{s2}, \hat{\bm{u}}^{s2}), s \in \mathcal{S}'_{21} \rbrace \cup  \lbrace (\hat{\bm{x}}^s, \hat{\bm{u}}^s), s \in \mathcal{S}'_{22} \rbrace$ and their associated $\lambda$. 

Finally, we verify (i), (ii), (iii) using the integral points and  $\lambda^s, s \in \mathcal{S}$ that we have constructed. Since we keep the first $T-1$ components of each $a^s$ to be the same as its corresponding $p^s$, by the way we construct $\lambda^s$, it suffices to verify (i), (ii), (iii) for only $t = T$.
\renewcommand{\theenumi}{(\roman{enumi})}%
\begin{enumerate}
\item We have $ \sum_{s \in \mathcal{S}} \lambda^s = 1$. It suffices to show that $\overline{x}_t =  \sum_{s \in \mathcal{S}} \lambda^s \hat{x}^{s}_{T}$,  $\overline{u}_T = \sum_{s \in \mathcal{S}} \lambda^s \hat{u}^{s}_{T}$. 

By construction, we have $ \sum_{s \in \mathcal{S}} \lambda^s \hat{x}^{s}_{T} =  \sum_{s \in \mathcal{S}'_{11}} \lambda^{s2} + \sum_{s \in \mathcal{S}'_{12}} \lambda^{s} + \sum_{s \in \mathcal{S}'_{21}} \lambda^{s1} = x_{T - 1} - \overline{w}_T + \overline{u}_T = \overline{x}_t$. 

Also, we have $\sum_{s \in \mathcal{S}} \lambda^s \hat{u}^{s}_{T} =  \sum_{s \in \mathcal{S}'_{21}} \lambda^{s1} = \overline{u}_T$.

\item By construction, we have $ \sum_{s \in \mathcal{S}_{T,d}} \lambda^s = \sum_{s \in \mathcal{S}'_{11}} \lambda^{s1} = \overline{w}_T$.

\item By construction, we have $ \sum_{s \in \mathcal{S}_{T,u}} \lambda^s = \sum_{s \in \mathcal{S}'_{21}} \lambda^{s1} = \overline{u}_T$.
\end{enumerate}

\end{proof}

\newpage
\bibliographystyle{IEEEtran}
\bibliography{library}

\end{document}